\newtheorem{theorem}{Theorem}
\newtheorem{proposition}[theorem]{Proposition}
\newtheorem{lemma}[theorem]{Lemma}
\numberwithin{theorem}{section}
\theoremstyle{definition}
\newtheorem{definition}[theorem]{Definition}
\newtheorem{example}[theorem]{Example}
\theoremstyle{remark}
 \newcommand{\RacahOne}[6]{
  \begin{tikzpicture}
 	\draw (-0.2,-0.1)--(-3,-0.1);
	\draw (-3,-0.1)--(-1.6,-2.5);
	\draw (-1.6,-2.5)--(-0.2,-0.1);
	\draw (-0.8,-0.4) node {$#3$};
	\draw (-1.6,-0.4) node {$#2$};
	\draw (-2.4,-0.4) node {$#1$};
	\draw (-1.2,-1.0) node {$#5$};
	\draw (-2,-1.0) node {$#4$};
	\draw (-1.6,-1.6) node {$#6$};
 \end{tikzpicture}
 }
\newcommand{\Zn}{(\mathbb{Z}_2)^n}
\newcommand{\su}{\mathfrak{su}(1,1)} 
\newcommand{\usu}{\mathcal{U}(\mathfrak{su}(1,1))}
\begin{document}

\title{A discrete realization of the higher rank Racah algebra}

\author{Hendrik De Bie}
\email{Hendrik.DeBie@UGent.be}
\address{Department of Mathematical Analysis\\Faculty of Engineering and Architecture\\Ghent University\\Building S8, Krijgslaan 281, 9000 Gent\\ Belgium.}



\author{Wouter van de Vijver}
\email{Wouter.vandeVijver.be}
\address{Department of Mathematical Analysis\\Faculty of Engineering and Architecture\\Ghent University\\Building S8, Krijgslaan 281, 9000 Gent\\ Belgium.}

\date{\today}
\keywords{Racah algebra, Racah polynomials, superintegrable system, coupling coefficients, difference operators}
\subjclass[2010]{33C50, 33C80; 47B39; 81R10, 81R12}

\begin{abstract}
In previous work a higher rank generalization $R(n)$ of the Racah algebra was defined abstractly. The special case of rank one 
 encodes the bispectrality of the univariate Racah polynomials and is known to admit an explicit realization in terms of the operators associated to these polynomials.
 Starting from the Dunkl model for which we have an action by $R(n)$ on the Dunkl-harmonics, we show that connection coefficients between bases of Dunkl-harmonics diagonalizing certain Abelian subalgebra are multivariate Racah polynomials. By lifting the action of $R(n)$ to the connection coefficients, we identify the action of the Abelian subalgebras with the action of the Racah operators defined by J. S. Geronimo and  P. Iliev. Making appropriate changes of basis one can identify each generator of $R(n)$ as a discrete operator acting on the multivariate Racah polynomials.
\end{abstract}

\maketitle

\section{Introduction}
\setcounter{equation}{0}

The Racah polynomials form a family of discrete orthogonal polynomials of ${}_{4}F_{3}$ hypergeometric type, that sits at the top of one branch of the celebrated Askey scheme \cite[Chapter~9]{Koekoek&Lesky&Swarttouw-2010} of orthogonal polynomials.  
Their properties can be explained by making use of the the so-called Racah algebra introduced in \cite{Granovskii&Zhedanov-1988}. It is defined as the infinite-dimensional associative algebra over $\mathbb{C}$ with generators $K_1$ and $K_2$ that satisfy, together with their commutator $K_3=\left[K_1,K_2\right]$, the following relations:
\begin{align}\label{eq:Racah}
\begin{split}
\left[K_2,K_3\right]&=K_2^2+\left\{K_1,K_2\right\}+dK_2+e_1, \\
\left[K_3,K_1\right]&=K_1^2+\left\{K_1,K_2\right\}+dK_1+e_2,
\end{split}
\end{align}
with $\left\{A,B\right\}:=AB+BA$ and where $d$, $e_1$, $e_2$ are structure constants. A realization of this algebra in terms of difference operators then encodes the (bispectral) properties of the Racah polynomials. The Racah algebra can alternatively be presented in terms of generators  $C_{12}$, $C_{23}$, $C_{13}$ and $F$. In this form one has $C_{12}+C_{23}+C_{13}=G$ and $\left[C_{23},C_{13}\right]=\left[C_{13},C_{12}\right]=\left[C_{12},C_{23}\right]=2F$, where $G$ is a central operator. The relations then read 
\begin{align} \label{equit}
\begin{split}
\left[C_{12},F\right]&=C_{23}C_{12}-C_{12}C_{13}+i_{12},\\
\left[C_{23},F\right]&=C_{13}C_{23}-C_{23}C_{12}+i_{23}, \\
\left[C_{13},F\right]&=C_{12}C_{13}-C_{13}C_{23}+i_{13},
\end{split}
\end{align}
with $i_{23}, i_{13}, i_{12}$ structure constants (or central elements). The relations (\ref{equit}) appeared initially in \cite{Gao&Wang&Hou-2013}. More details on the passage between \eqref{eq:Racah} and \eqref{equit} can be found in \cite{Genest&Vinet&Zhedanov-2014}. Note that in the explicit difference operator realization, one of the operators $C_{23}$, $C_{13}$, $C_{12}$ then acts diagonally on all Racah polynomials. This is shown in the subsequent Proposition \ref{DiscreteRankOne}.
The presentation (\ref{equit}) is commonly referred to as the ``equitable'' or ``democratic'' presentation \cite{Levy-Leblond&Levy-Nahas-1965,  Terwilliger-2006}. It is precisely this shape that will play a crucial role in the present paper.

The Racah algebra has deep connections with superintegrable systems and with the 3-fold tensor product of $\usu$, the universal enveloping algebra of the Lie algebra $\su$.

We call a quantum system with Hamiltonian $H$ and $d$ degrees of freedom maximally superintegrable if it has $2d-1$ algebraically independent constants of motion (including $H$). We consider models, where the conserved charges are of degree at most $2$ in the momenta. In 2 dimensions they can all be obtained as limits of the so-called generic $3$-parameter system on the $2$-sphere \cite{Kalnins&Miller&Post-2013}. Its Hamiltonian is given by:
\begin{equation} \label{eq:Hamiltonian1}
 H:=\sum_{1\leq i < j \leq 3} J_{ij}^2
 +(x_1^2 + x_2^2 + x_{3}^2)\sum_{i=1}^{3}\frac{a_i}{x_i^2},
\end{equation}
where $J_{ij}=i(x_j \partial_{x_i} - x_i \partial_{x_j})$ are the angular momentum operators and where $a_1,a_2,a_3$ are real parameters.  
The Racah algebra surfaced in this context when the symmetry algebra formed by the constants of motion of $H$ was realized in \cite{Kalnins&Miller&Post-2007} using the difference operator of which the Racah polynomials are eigenfunctions.  In \cite{Genest&Vinet&Zhedanov-2014-2} this observation was cast in the framework of the Racah problem for $\mathfrak{su}(1,1)$. Indeed, upon using the singular oscillator realization of $\mathfrak{su}(1,1)$, the Hamiltonian $H$ can be identified with the total Casimir operator obtained when three such representations are combined. The intermediate Casimir operators then satisfy the relations of the (centrally extended) Racah algebra (\ref{equit}). For a review we refer the reader to \cite{Genest&Vinet&Zhedanov-2014-3}.  



It thus becomes very natural to extend the Racah algebra to higher rank. This was initially done in the superintegrable context \cite{Iliev-2017} using the $n$-dimensional generalization of (\ref{eq:Hamiltonian1}), and the irreducibility of the representation spaces that arise in this model has been proven in \cite{Iliev-2018}. In the context of abstract tensor products, the construction was given in \cite{DeBie&Genest$vandeVijver&Vinet} and illustrated with a new model using the $\Zn$ Laplace-Dunkl operator \cite{DTAMS, DX}. 
The extension goes as follows. Consider now the $n$-fold tensor product of $\usu$ and the Casimir $C$ of $\su$. Using the coproduct, $C$ can be lifted to an arbitrary number of factors in the tensor product, leading to intermediate Casimirs $C_A$ defined for any subset $A \subset [n]=\{1, \ldots, n\}$. Here the set $A$ describes the relevant factors in the tensor product. The algebra generated by $\{ C_A\}$ is then called the Racah algebra of rank $n-2$.

A multivariate generalization of the Racah polynomials has been defined, see \cite{Tratnik-1991}. A natural problem is hence to find the connection between the higher rank Racah algebra on the one hand and the multivariate Racah polynomials on the other hand. In the present paper we will solve that problem by constructing an explicit realization of the Racah algebra in terms of difference operators acting on the Racah polynomials.

For the solution of this problem, we will benefit from two crucial facts from the existing literature. First, there already exists a realization of a distinct Abelian subalgebra of the higher rank Racah algebra on the multivariate Racah polynomials. This was achieved through a complicated construction in \cite{Geronimo&Iliev-2010}, with a different aim, namely to investigate the multispectrality of these polynomials. Second, the multivariate Racah polynomials arise as coupling coefficients between different bases of representations of $n$-fold tensor products of $\usu$, as shown in \cite{DeBie&Genest$vandeVijver&Vinet}.

Our construction can then be outlined as follows.
The action of the Racah algebra in the Dunkl realization of  \cite{DeBie&Genest$vandeVijver&Vinet} can be lifted to an action on the coupling coefficients arising in this model, which we know are multivariate Racah polynomials. The two bases related by the coupling coefficients are diagonalizing two distinct Abelian subalgebras. Lifting the action of these Abelian subalgebras to the connection coefficients, one arrives at the conclusion that one Abelian subalgebra is realized by operators that are multiplication by certain quadratic polynomials. The other Abelian algebra has the multivariate Racah polynomials as eigenvectors. Then one concludes that this Abelian algebra coincides with the realization given in \cite{Geronimo&Iliev-2010}. The full discrete realization of the Racah algebra is then obtained by making changes of basis to bases diagonalizing different Abelian subalgebras.

The paper is organized as follows. In Section \ref{Preliminaries} the higher rank Racah algebra is introduced and the properties needed for the article are given. The univariate and multivariate Racah polynomials are defined as well as the operators $\mathcal{L}_i$ acting on these polynomials. In Section \ref{TheMainTheorem} we state the main Theorem. We present two examples of this theorem, the rank one and rank two case.
In Section \ref{MultivariateRacahPolynomialsAsConnectionCoefficients} we recapitulate the Dunkl model and show that the multivariate Racah polynomials appear as connection coefficients between bases of Dunkl harmonics. We finish this section with a discussion on the gauge coefficients appearing along the multivariate Racah polynomials. In Section \ref{DiscreteRealization} we present the proof for the main Theorem. We first give a proof of the rank two case before finally tackling the general case.

\section{Preliminaries}\label{Preliminaries}

\subsection{The Racah algebra}
The algebra $\mathfrak{su}(1,1)$ is generated by three elements $J_\pm$ and $A_0$ with well-known relations:
\begin{equation}\label{su11}
[J_-,J_+]=2A_0, \qquad [A_0, J_\pm]=\pm J_\pm.
\end{equation}
The universal enveloping algebra $\mathcal{U}(\mathfrak{su}(1,1))$ contains the Casimir element of $\mathfrak{su}(1,1)$:
\begin{equation}\label{Casimir}
	C:=A_0^2-A_0-J_+J_-.
\end{equation}
There exists a morphism $\mu^*$ called the comultiplication that maps $\mathfrak{su}(1,1)$ into $\mathfrak{su}(1,1)\otimes\mathfrak{su}(1,1)$ with following action on the generators:
\begin{align*}
 \mu^*(J_{\pm})=J_{\pm}\otimes 1 + 1 \otimes J_{\pm}, \\
  \mu^*(A_{0})=A_{0}\otimes 1 + 1 \otimes A_{0}.
\end{align*}
This map extends to the universal enveloping algebra. We are now able to define the operators that generate the higher rank Racah algebra. Define inductively
$$
\mathscr{C}_1:=C, \qquad  \mathscr{C}_n:=(\underbrace{1\otimes\ldots\otimes 1}_{n-2 \text{ times }} \otimes \mu^*)(\mathscr{C}_{n-1}).
$$
 Consider the map
$$
\tau_k: \bigotimes_{i=1}^{m-1} \mathcal{U}(\mathfrak{su}(1,1))\rightarrow \bigotimes_{i=1}^{m} \mathcal{U}(\mathfrak{su}(1,1)),
$$
which acts as follows on homogeneous tensor products:
$$
\tau_k(t_1\otimes \ldots \otimes t_{m-1}):=t_1\otimes \ldots \otimes t_{k-1} \otimes 1 \otimes t_k \otimes \ldots \otimes t_{m-1} .
$$
and extend it by linearity.  The map $\tau$ adds a $1$ at the $k$-th place. This allows to define the following:
\begin{align}
\label{Casimir-Upper}
C_A:=\left(\prod_{k \in \left[ n\right] \backslash A}^{\longrightarrow} \tau_k \right)\left(\mathscr{C}_{|A|}\right).
\end{align}
with $A$ a subset of $[n]$. Here we introduce the notation $[n]:=\{1,\ldots, n \}$ for the set of integers from $1$ to $n$. Later we will also use the notation $[i \ldots n]:=\{i,\ldots ,n \}$.
\begin{definition}
The Racah algebra of rank $n-2$ is the subalgebra of $\bigotimes_{i=1}^{n} \mathcal{U}(\mathfrak{su}(1,1))$ generated by the set $\{ C_A | A \subset [n] \}$. We denote this algebra by $R(n)$.
\end{definition}
For a full account on this generalized Racah algebra $R(n)$ we refer the reader to \cite{DeBie&Genest$vandeVijver&Vinet}. For our purposes we only need to know the following three facts, which we collect in three lemmas. 

\begin{lemma}\label{Commute}
 If either $A \subset B$ or $B \subset A$ or $A \cap B=\emptyset$ then $C_A$ and $C_B$ commute.
\end{lemma}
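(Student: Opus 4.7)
The plan is to dispose of the three cases separately; the disjoint case is essentially trivial from the tensor-product definition, while the nested case reduces to the Casimir property applied to a diagonal subaction.

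For the case $A \cap B = \emptyset$, I would unpack the definition in \eqref{Casimir-Upper}. The operator $C_A$ has identity in every tensor slot not indexed by $A$, and likewise $C_B$ has identity outside slots indexed by $B$. Since $A$ and $B$ are disjoint, on each tensor slot either $C_A$ or $C_B$ (or both) acts as the identity, so the two operators commute factor-by-factor. This is just the remark that operators supported on disjoint tensor factors commute.

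For the case $A \subset B$ (the case $B \subset A$ is symmetric), I would prove the stronger statement that $C_A$ commutes with the diagonal image of $\mathfrak{su}(1,1)$ on the factors indexed by $B$. For $X \in \{J_+,J_-,A_0\}$ set
\begin{equation*}
X_S := \sum_{i \in S}\, 1 \otimes \cdots \otimes 1 \otimes \underbrace{X}_{\text{slot } i} \otimes 1 \otimes \cdots \otimes 1,
\end{equation*}
for any $S \subset [n]$. Unwinding the inductive definition of $\mathscr{C}_{|A|}$ together with $\tau_k$, one sees that $C_A$ is exactly the Casimir expression \eqref{Casimir} computed with $J_\pm,A_0$ replaced by $J_{\pm,A},A_{0,A}$. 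By the Casimir property (i.e.\ $C$ is central in $\usu$, transported through the iterated coproduct) we have $[C_A, X_A] = 0$. On the other hand, $X_{B\setminus A}$ acts on slots disjoint from $A$, so it commutes with $C_A$ by the first case. Writing $X_B = X_A + X_{B\setminus A}$ we conclude $[C_A, X_B] = 0$ for every generator $X$. Since $C_B$ is, by the same unwinding, a polynomial in $J_{\pm,B}$ and $A_{0,B}$, it commutes with $C_A$ as well.

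The main technical point is the bookkeeping in the second case: one has to check that after composing the iterated coproducts with the insertion maps $\tau_k$ for $k \in [n]\setminus A$ and separately for $k \in [n]\setminus B$, the operator $C_A$ really is expressible in the generators $X_A$ and $C_B$ in the generators $X_B$. Once this identification is made, the splitting $X_B = X_A + X_{B\setminus A}$ combined with the Casimir property and disjoint-support commutation finishes the argument cleanly.
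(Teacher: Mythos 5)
Your proposal is correct and is exactly the argument the paper is gesturing at when it says the lemma ``follows by construction of the operators $C_A$'' (the detailed version being deferred to the cited reference): disjointly supported tensor operators commute, and for $A \subset B$ one uses that $C_A$ is the Casimir expression in the diagonal generators $X_A$, which commutes with $X_A$ by centrality of $C$ transported through the algebra morphism given by the iterated coproduct and the insertions $\tau_k$, and with $X_{B\setminus A}$ by disjointness, hence with $C_B$ as a polynomial in the $X_B = X_A + X_{B\setminus A}$. The bookkeeping step you flag is handled by coassociativity of $\mu^*$, which guarantees that $\mathscr{C}_{|A|}$, and hence $C_A$ after the insertions, really is the Casimir in the summed generators over the slots of $A$.
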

This follows by construction of the operators $C_A$. Observe that by this lemma $C_i$ and $C_{[n]}$ are central in $R(n)$. Consider a chain of sets $A_1 \varsubsetneq A_2 \varsubsetneq \dots A_j$, each set strictly contained in the next one. The operators belonging to these sets $C_{A_i}$ generate an Abelian algebra. 
\begin{definition}\label{labelling}
Consider a chain $\mathcal{C}$ with maximal length: $A_i \varsubsetneq A_{i+1}$ with $|A_{i+1}|=|A_{i}|+1$. The set of Casimirs belonging to this chain excluding the central Casimirs generate the Abelian algebra
\[
 \mathscr{Y}_{\mathcal{C}}=\langle C_{B}|B \in \mathcal{C} \text { and } |B|\neq 1, n \rangle.
\]
We call this algebra a labelling Abelian algebra.
\end{definition}
\noindent
The rank of a Racah algebra is equal to the dimension of a labelling Abelian algebra.

The operators $C_A$ are not linearly independent:
\begin{lemma}\label{Lind} For any set $A \subset [n]$, it holds that
\begin{equation}
 C_A=\sum_{\left\{i,j\right\}\subset A} C_{ij}-\left(|A|-2\right)\sum_{i \in A} C_i .
\end{equation}
\end{lemma}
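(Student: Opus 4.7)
The plan is to decompose $\mathscr{C}_n$ cleanly into ``self-interactions'' $C_{\{k\}}$ and ``pair interactions'' and then count. First I would verify by direct computation from $C=A_0^2-A_0-J_+J_-$ that
\[
\mu^*(C) = C\otimes 1 + 1\otimes C + P_{12},\qquad P_{12}:=2A_0\otimes A_0 - J_+\otimes J_- - J_-\otimes J_+.
\]
Rewriting the right-hand side, this says $P_{12}=C_{12}-C_1-C_2$ in the two-fold tensor product. I would then \emph{define} $P_{ij}:=C_{ij}-C_i-C_j$ in any $n$-fold tensor product; the key observation is that $P_{ij}$ acts as the identity outside factors $i,j$ and its expression on those two factors is independent of $n$.

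Next, the heart of the proof is the identity
\[
\mathscr{C}_n \;=\; \sum_{k=1}^{n} C_{\{k\}} \;+\; \sum_{1\le i<j\le n} P_{ij},
\]
which I would establish by induction on $n$. The base cases $n=1,2$ are immediate from the preceding calculation. For the step, apply the one-sided coproduct $1\otimes\cdots\otimes 1\otimes\mu^*$ to both sides. The self-interaction $C_{\{n-1\}}$ maps to $C_{\{n-1\}}+C_{\{n\}}+P_{n-1,n}$ by the base case. For a pair interaction $P_{i,n-1}$ with $i<n-1$, since $A_0$ and $J_\pm$ are primitive under $\mu^*$ and $P_{i,n-1}$ is linear in its $(n-1)$-th tensor slot, one finds $(1^{\otimes(n-2)}\otimes\mu^*)(P_{i,n-1}) = P_{i,n-1}+P_{i,n}$. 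Collecting terms produces exactly the required decomposition at level $n$.

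Having this, I would invoke the definition $C_A=\bigl(\prod_{k\in[n]\setminus A}\tau_k\bigr)(\mathscr{C}_{|A|})$: since the operators $\tau_k$ merely insert $1$'s at prescribed slots, they relabel self- and pair-interactions along the order-preserving bijection $[|A|]\to A$, giving
\[
C_A \;=\; \sum_{k\in A} C_k \;+\; \sum_{\{i,j\}\subset A}(C_{ij}-C_i-C_j).
\]
The final step is pure counting: each fixed $C_i$ with $i\in A$ appears in exactly $|A|-1$ pairs, so its total coefficient is $1-(|A|-1)=-(|A|-2)$, yielding the stated formula.

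The only delicate step is verifying the transformation $P_{i,n-1}\mapsto P_{i,n-1}+P_{i,n}$ under the one-sided coproduct; this is a short bilinearity argument using primitivity of $A_0$ and $J_\pm$, but it is the one place where something genuinely has to be checked. Everything else is either the base computation for $\mu^*(C)$ or elementary combinatorial bookkeeping.
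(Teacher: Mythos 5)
Your proof is correct. Note that the paper itself does not prove Lemma \ref{Lind}: it is one of three facts imported without proof from the companion paper \cite{DeBie&Genest$vandeVijver&Vinet}, so there is no in-text argument to compare against. Your route is the natural one and, as far as I can tell, the same mechanism used in that reference: the computation $\mu^*(C)=C\otimes 1+1\otimes C+P_{12}$ with $P_{12}=2A_0\otimes A_0-J_+\otimes J_--J_-\otimes J_+$ is right, the inductive decomposition $\mathscr{C}_n=\sum_k C_{\{k\}}+\sum_{i<j}P_{ij}$ goes through because $A_0$ and $J_\pm$ are primitive (so each tensor slot of $P_{i,n-1}$ splits linearly under the one-sided coproduct, giving $P_{i,n-1}\mapsto P_{i,n-1}+P_{i,n}$), and the $\tau_k$'s merely relabel slots along the order-preserving bijection $[|A|]\to A$, so that $P_{ij}=C_{ij}-C_i-C_j$ consistently in the $n$-fold product. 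The final count $1-(|A|-1)=-(|A|-2)$ for the coefficient of each $C_i$ is also correct. The one step you flag as delicate is indeed the only one requiring care, and your bilinearity/primitivity justification for it is sound.
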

Here we abbreviate the notation $\{i,j\}$ by $ij$ in the index of $C_{ij}$.
This means that the set $\mathcal{B}_1= \{ C_{ij}: \{i,j\} \subset [n] \} \cup \{ C_{i}| i \in  [n] \}$ generates $R(n)$.
Also the set $\mathcal{B}_2=\{ C_{[i \ldots j]}| i \leq j\}$ generates the full algebra $R(n)$ as every element of $\mathcal{B}_1$ can be written as  a linear combination of $\mathcal{B}_2$ in the following way by Lemma \ref{Lind}:
\[
 C_{ij}=C_{[i\ldots j]}-C_{[i+1 \ldots j]}-C_{[i \ldots j-1]}+C_{[i+1 \ldots j-1]}+C_i+C_j.
\]

The third lemma is the most important one:
\begin{lemma}\label{Racah1}
Let $K$, $L$ and $M$ be three disjoint subsets of $[n]$. The algebra generated by the set
\[ \{ C_K,C_L,C_M,C_{K \cup L}, C_{K \cup M}, C_{ L \cup M}, C_{K \cup L \cup M}  \} \]
is isomorphic to the rank $1$ algebra $R(3)$ and has therefore the following relations:
Introduce the operator $F$:
\begin{align}\label{B}
2F:=[C_{KL},C_{LM}] = [C_{KM},C_{KL}] = [C_{LM},C_{KM}].
\end{align}
Together with the previous lemmas the relations become
\begin{align} \label{RankoneRacah}
\begin{split} \\
[C_{KL},F]&=C_{LM}C_{KL}-C_{KL}C_{KM}+\left(C_L-C_K\right)\left(C_{M}-C_{KLM}\right), \\
[C_{LM},F]&=C_{KM}C_{LM}-C_{LM}C_{KL}+\left(C_M-C_L\right)\left(C_{K}-C_{KLM}\right),\\
[C_{KM},F]&=C_{KL}C_{KM}-C_{KM}C_{LM}+\left(C_K-C_M\right)\left(C_{L}-C_{KLM}\right).
\end{split}
\end{align}
We denote this algebra by $R^{K,L,M}(3)$.
\end{lemma}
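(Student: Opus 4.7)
The plan is to reduce the statement to the fact that the rank-one Racah algebra $R(3)$ is already realized inside $\usu^{\otimes 3}$, by constructing an algebra homomorphism $\Phi : \usu^{\otimes 3} \to \usu^{\otimes n}$ that carries the seven generators of $R(3)$ precisely onto the seven operators listed in the lemma. The key observation that makes this work is that $K$, $L$, $M$ are pairwise disjoint, so three independent iterated coproducts can be run in parallel without interference.

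Concretely, write $X_1 = K$, $X_2 = L$, $X_3 = M$, and denote by $J_\pm^{(k)}, A_0^{(k)}$ (resp.\ $J_\pm^{[i]}, A_0^{[i]}$) the generators of $\su$ sitting in the $k$-th factor of $\usu^{\otimes n}$ (resp.\ the $i$-th factor of $\usu^{\otimes 3}$). I define $\Phi$ on generators by
\[
\Phi(J_\pm^{[i]}) = \sum_{k \in X_i} J_\pm^{(k)}, \qquad \Phi(A_0^{[i]}) = \sum_{k \in X_i} A_0^{(k)}, \qquad i = 1,2,3,
\]
and extend multiplicatively. This extends to a well-defined algebra morphism because (i) for each fixed $i$, the prescription is exactly the $(|X_i|-1)$-fold iterated coproduct of $\mu^*$ onto the positions of $X_i$, so relations (\ref{su11}) are preserved; and (ii) for $i \neq j$ the images have disjoint support inside $\usu^{\otimes n}$ and therefore commute, matching the commutativity of generators sitting in distinct tensor factors of $\usu^{\otimes 3}$.

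Next I would verify that $\Phi$ sends Casimirs to Casimirs as expected. From (\ref{Casimir}) and the definition of $\Phi$, the single-factor Casimir $C$ placed in position $i$ of $\usu^{\otimes 3}$ is mapped to the Casimir of the iterated coproduct targeting $X_i$, which by the inductive definition (\ref{Casimir-Upper}) is $C_{X_i}$. Applying $\Phi$ to the two-factor Casimir $\mathscr{C}_2$ sitting on factors $\{i,j\}$ yields the Casimir of the iterated coproduct onto $X_i \cup X_j$, i.e.\ $C_{X_i \cup X_j}$; and $\Phi(\mathscr{C}_3) = C_{K \cup L \cup M}$. Since $\Phi$ is a homomorphism, every polynomial identity among $C_1, C_2, C_3, C_{12}, C_{13}, C_{23}, C_{123}$ in $\usu^{\otimes 3}$ transports to the same identity for the images. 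In particular, the equitable relations (\ref{equit}) — in the form in which, inside $\usu^{\otimes 3}$, the central constants specialize to $i_{ij} = (C_j - C_i)(C_k - C_{123})$ with $\{i,j,k\}=\{1,2,3\}$ — transport along $\Phi$ to exactly the relations (\ref{RankoneRacah}).

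The main obstacle is establishing those rank-one relations with the correct polynomial structure constants inside $\usu^{\otimes 3}$: namely that
\[
[C_{12}, F] \;=\; C_{23} C_{12} - C_{12} C_{13} + (C_2 - C_1)(C_3 - C_{123})
\]
and its two cyclic cousins hold identically in $\usu^{\otimes 3}$. This is a direct but lengthy computation in the universal enveloping algebra using the coproduct formulas on $J_\pm$ and $A_0$; rather than redo it, I would cite \cite{Genest&Vinet&Zhedanov-2014}, where this computation is carried out. Once this is in place, the remainder is formal: the image $\Phi(R(3))$ is generated by the seven operators in the statement, they satisfy (\ref{RankoneRacah}), and the universal property presenting $R(3)$ by these relations identifies the generated subalgebra with $R(3)$ (the surjection $R(3) \twoheadrightarrow \Phi(R(3))$ is an isomorphism, which may be seen by exhibiting a faithful representation of the image such as the Dunkl model developed later in the paper).
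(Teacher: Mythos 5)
Your proposal is correct and follows essentially the same route as the paper: the paper defers the detailed verification to \cite{DeBie&Genest$vandeVijver&Vinet}, and its own generalization (Lemma \ref{Racahk}) describes precisely your strategy of an injective morphism from the threefold tensor product into the $n$-fold one, induced by iterated coproducts on the disjoint sets $K$, $L$, $M$, carrying $C_B$ to $C_{A_B}$. Your appeal to \cite{Genest&Vinet&Zhedanov-2014} for the explicit rank-one structure constants is consistent with how the paper itself sources that computation.
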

The rank one Racah algebra $R(3)$ is a central extension of the classical Racah algebra. This can be seen by considering relations (\ref{B}) and (\ref{RankoneRacah}). Throughout this paper we will often make use of Lemma \ref{Racah1} and properties of the rank one Racah algebra. The next subsection will give an overview of these properties.

\subsection{The rank one Racah algebra}
We restrict our attention to the rank one case, the algebra $R(3)$. This algebra is generated by the following set of operators:
\[
  \{ C_1,C_2,C_3,C_{12}, C_{13}, C_{23}, C_{123} \}.
\]
By Lemma \ref{Commute} the operators $C_1$, $C_2$ and $C_3$ and $C_{123}$ are central. By Lemma \ref{Lind} they have the following linear dependency
\begin{equation}\label{Lind1}
 C_{123}=C_{12}+C_{13}+C_{23}-C_1-C_2-C_3.
\end{equation}
We will represent this algebra by the following diagram. We left out $C_{13}$ as it is a linear combination of the other generators. 
\begin{center}
 \RacahOne{C_1}{C_2}{C_3}{C_{12}}{C_{23}}{C_{123}}
 \end{center}
Let $V$ be an irreducible representation of this algebra such that $C_{12}$ and $C_{23}$ can be diagonalized. The central elements will act as scalars. 
Consider Lemma \ref{Racah1} with $K=\{1\}$, $L=\{2\}$ and $M=\{3\}$ to see that $C_{12}$ and $C_{23}$ do not commute and therefore cannot be diagonalized by the same basis. Let $\psi_\ell$ be a basis diagonalized by $C_{12}$ and $\varphi_k$ be a basis diagonalized by $C_{23}$. An interesting question arises: 
What are the connection coefficients between these bases? It turns out these are the Racah polynomials

\begin{definition} (\cite{Koekoek&Lesky&Swarttouw-2010})
Let $r_n(\alpha,\beta,\gamma, \delta; x)$ be the classical univariate Racah polynomials
 \begin{align*}
  	r_n(\alpha,\beta,\gamma, \delta; x)&:=(\alpha+1)_n(\beta+\delta+1)_n(\gamma+1)_n \\
  	& \quad \times {}_4F_3 \left[ \substack{ -n, n+\alpha+\beta+1, -x, x+\gamma+\delta+1 \\ \alpha+1, \beta+\delta+1, \gamma+1}; 1 \right].
 \end{align*}
\end{definition}

We have the following proposition (see also  \cite{Kalnins&Miller&Post-2011},\cite{Post}):
\begin{proposition}\label{UnivariateRacah}
Let $V$ be an irreducible N-dimensional representation of $R(3)$. Let $\{ \psi_\ell\}$ be a basis that diagonalizes $C_{12}$ and $\{ \varphi_k\}$ be a basis that diagonalizes $C_{23}$. The connection coefficients $R_k(\ell)$ given by
\[ \sum_\ell R_k (\ell) \psi_\ell=\varphi_k\]
are up to a gauge factor the classical univariate Racah polynomials. 
\end{proposition}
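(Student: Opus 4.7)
The plan is to turn the algebraic relations \eqref{RankoneRacah} into a three-term recurrence for the coefficients $R_k(\ell)$ and then recognize the recurrence as the one defining the classical Racah polynomials.

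First, I would fix notation: let $C_{12}\psi_\ell = \mu_\ell \psi_\ell$ and $C_{23}\varphi_k = \nu_k \varphi_k$, and note that the central operators $C_1, C_2, C_3, C_{123}$ act as scalars $c_1,c_2,c_3,c_{123}$ on $V$ by Schur's lemma. Expanding
\[
\varphi_k = \sum_\ell R_k(\ell)\, \psi_\ell
\]
and applying $C_{23}$ gives $\nu_k R_k(\ell) = \sum_{\ell'} \langle\psi_\ell|C_{23}|\psi_{\ell'}\rangle R_k(\ell')$, so everything hinges on the matrix of $C_{23}$ in the $C_{12}$-eigenbasis.

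Second, I would show $C_{23}$ is tridiagonal in $\{\psi_\ell\}$. Using Lemma \ref{Lind} to eliminate $C_{13}$ in favour of $C_{12},C_{23}$ and central elements, the first relation of \eqref{RankoneRacah} can be rewritten as
\[
[C_{12},F] = \{C_{12},C_{23}\} + C_{12}^2 + \alpha C_{12} + \beta C_{23} + \gamma,
\]
for explicit constants $\alpha,\beta,\gamma$ depending on $c_1,c_2,c_3,c_{123}$. Combining with $2F=[C_{12},C_{23}]$ yields the Askey-Wilson-type double commutator $[C_{12},[C_{12},C_{23}]] = 2\{C_{12},C_{23}\} + 2C_{12}^2 + 2\alpha C_{12} + 2\beta C_{23} + 2\gamma$. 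Taking matrix elements in the $C_{12}$-eigenbasis,
\[
\bigl[(\mu_\ell-\mu_{\ell'})^2 - 2(\mu_\ell+\mu_{\ell'}) - 2\beta\bigr]\langle\psi_\ell|C_{23}|\psi_{\ell'}\rangle = \delta_{\ell\ell'}\,\bigl(2\mu_\ell^2 + 2\alpha\mu_\ell + 2\gamma\bigr).
\]
For $\ell\neq \ell'$ the right-hand side vanishes, so either the matrix element is zero or $(\mu_\ell-\mu_{\ell'})^2 = 2(\mu_\ell+\mu_{\ell'})+2\beta$. Because $V$ is irreducible and finite-dimensional this quadratic constraint forces the spectrum of $C_{12}$ to be of Racah type $\mu_\ell = (\ell+\sigma)(\ell+\sigma+1) + \mathrm{const}$, and only the neighbours $\ell' = \ell\pm 1$ survive; hence $C_{23}$ is tridiagonal. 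An identical argument with the roles of $C_{12}$ and $C_{23}$ swapped fixes the Racah form of $\nu_k$.

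Third, substituting tridiagonality gives the three-term recurrence
\[
\nu_k R_k(\ell) = a_\ell R_k(\ell+1) + b_\ell R_k(\ell) + c_\ell R_k(\ell-1),
\]
whose coefficients $a_\ell,b_\ell,c_\ell$ are computed from the Racah relations, the known spectrum of $\mu_\ell$, and the scalars $c_1,c_2,c_3,c_{123}$. After absorbing the $\ell$-dependent prefactor produced by the normalization of $\psi_\ell$ and $\varphi_k$ into a gauge factor $g(\ell)$, the normalized coefficients $\widetilde R_k(\ell) = g(\ell) R_k(\ell)$ satisfy exactly the three-term recurrence of $r_k(\alpha,\beta,\gamma,\delta;\ell)$ with parameters $(\alpha,\beta,\gamma,\delta)$ determined by the central elements and by $N$. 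Together with the initial value $\widetilde R_k(0)$, which matches the leading Pochhammer prefactor in the definition of $r_k$, this identifies $\widetilde R_k(\ell)$ with the classical univariate Racah polynomials.

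The main obstacle is the second step: extracting the Racah spectrum of $C_{12}$ (and $C_{23}$) from the abstract relations and showing genuine tridiagonality rather than merely a constraint on matrix elements. This is essentially a Leonard-pair argument and requires careful tracking of the central scalars so that the resulting parameters $(\alpha,\beta,\gamma,\delta)$ coincide with those coming from $c_1,c_2,c_3,c_{123}$ and the dimension $N$ of the representation.
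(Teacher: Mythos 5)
The paper does not actually prove Proposition \ref{UnivariateRacah}; it defers to the references \cite{Kalnins&Miller&Post-2011} and \cite{Post} and only records the outcome through the spectra \eqref{spectra1} and the explicit parameter identification \eqref{ConnRacah}. Your sketch is essentially the standard argument carried out in that literature (a Leonard-pair/tridiagonalization argument), and its skeleton is sound: eliminating $C_{13}$ via Lemma \ref{Lind} does turn the first relation of \eqref{RankoneRacah} into $[C_{12},F]=\{C_{12},C_{23}\}+C_{12}^2+\alpha C_{12}+\gamma$ with $\alpha=-(c_1+c_2+c_3+c_{123})$ and $\gamma=(c_2-c_1)(c_3-c_{123})$ (note that in this presentation the coefficient you call $\beta$ is actually $0$), the double-commutator identity and its matrix elements are as you state, and with $\mu_\ell=\kappa(\ell,\beta_1)$ the off-diagonal constraint indeed admits only $\ell'=\ell\pm 1$. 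The resulting three-term relation in $\ell$ with eigenvalue $\nu_k$ is the three-point difference equation of Proposition \ref{DiscreteRankOne}, which together with the boundary condition at $\ell=0$ pins down $R_k(\ell)$ up to normalization. The one place where your argument is thinner than what a self-contained proof requires is the step you yourself flag: deducing from irreducibility alone that the spectrum of $C_{12}$ has the Racah form and that the eigenvalues are simple and arranged in a chain (so that ``either the matrix element vanishes or the eigenvalues are adjacent'' really yields tridiagonality). That is the content of the Leonard-pair classification and is not automatic. In the setting where the paper uses the proposition this step can be bypassed entirely, because the spectra of $C_{12}$, $C_{23}$ and the central elements are computed explicitly in the Dunkl model (Lemma \ref{spectra}, formula \eqref{spectra1}) and non-degeneracy is established in Lemma \ref{irreducibility}; with the spectra given, your steps two and three become straightforward verifications. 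So: correct in outline, a genuinely more self-contained route than the paper's citation, but the spectral input should either be imported from the model or proved via the Leonard-pair machinery rather than asserted.
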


The explicit form of the Racah polynomials can be determined by the action of the generators of $R(3)$.
For that aim we introduce the polynomial 
\begin{equation}\label{kappa}
\kappa(x,\beta):=\left(x+\frac{\beta+1}{2}\right)\left(x+\frac{\beta-1}{2}\right).
\end{equation}
The eigenvalues of the operators take the following form:
\begin{align}\label{spectra1}
\begin{split}
   C_1&:\kappa(0,\beta_0) \\ 
   C_2&:\kappa(0,\beta_1-\beta_0-1) \\
   C_3&:\kappa(0,\beta_2-\beta_1-1) \\
   C_{12}&: \kappa(\ell,\beta_1) \\
   C_{23}&:\kappa(k,\beta_2-\beta_0-1) \\
   C_{123}&:\kappa(N,\beta_2).
   \end{split}
\end{align}
The constants $\beta_0$, $\beta_1$ and $\beta_2$ are parameters depending on the representation of $R(3)$. They have been cast in a form that suits our needs best.
Then the connection coefficients are given up to a gauge factor by
\begin{equation}\label{ConnRacah}
  \hat{R}_k(\ell):=r_{k}(\beta_1-\beta_0-1,\beta_{2}-\beta_1-1,-N-1, \beta_1+N;\ell).
\end{equation} 
The number $\hat{R}_k(\ell)$ differs from $R_{k}(\ell)$ by the gauge factor. The gauge factor can be found by normalizing both bases. If this is the case, the connection coefficients are normalized Racah polynomials. Moreover, these polynomials are orthogonal for the following weight:
\begin{equation}\label{ConnRho}
	 \rho(\ell)=\frac{\Gamma(\beta_1+\ell)\Gamma(\beta_1-\beta_0+\ell)}{\Gamma(\beta_0+1+\ell)\ell!}(\beta_1+2\ell)\frac{\Gamma(\beta_2+N+\ell)\Gamma(\beta_2-\beta_1+N-\ell)}{\Gamma(\beta_1+N+1+\ell)(N-\ell)!} .
\end{equation}
We have
\begin{equation} \label{ConnOrtho}
	\sum_{\ell=0}^{N} \frac{\rho(\ell)}{\lambda(k)}\hat{R}_k(\ell)\hat{R}_{k'}(\ell)=\delta_{k,k'} 
\end{equation}
with $\frac{1}{\lambda(k)}$ equal to
\begin{equation}\label{ConnLAmbda} 
\frac{\Gamma(k+\beta_2-\beta_0-1)}{\Gamma(k+\beta_2-\beta_1)\Gamma(k+\beta_1-\beta_0)k!}(2k+\beta_2-\beta_0-1)\frac{\Gamma(\beta_0+N+1-k)(N-k)!}{\Gamma(k+\beta_2-\beta_0+N)\Gamma(\beta_2+N+k)} .
\end{equation}

The relation between Racah polynomials and $R(3)$ can also be given more explicitly
\begin{proposition}\label{DiscreteRankOne}
Let $\hat{C}_{12}=\kappa(x,\beta_1)$ and $\hat{C}_{23}=-\mathcal{L}_1+\kappa(0,\beta_2-\beta_0-1)$ with  $\mathcal{L}_1$ a shift operator explicitly given by
 
\begin{align*}
 \mathcal{L}_1=\frac{(x+\beta_1-\beta_0)(x+\beta_1)(N+x+\beta_2)(N-x)}{(2x+\beta_1)(2x+\beta_1+1)}(E_x-1)+ \\
 \frac{(x+\beta_0)x(N-x-\beta_1+\beta_2)(N+x+\beta_1)}{(2x+\beta_1)(2x+\beta_1-1)}(E_x^{-1}-1)\\
 \end{align*}
with the operator $E_x(f(x)):=f(x+1)$. The central operators $\hat{C}_1$, $\hat{C}_2$, $\hat{C}_3$ and $\hat{C}_{123}$ are constants as given in formula (\ref{spectra1}) and the operator $\hat{C}_{13}$ is given by formula (\ref{Lind1}).
 They generate a discrete realization of the $R(3)$. The eigenvectors of $\hat{C}_{23}$ are exactly the univariate Racah polynomials 
\[
r_{k}(\beta_1-\beta_0-1,\beta_{2}-\beta_1-1,-N-1, \beta_1+N;x)
\]
with corresponding eigenvalues $\kappa(k,\beta_2-\beta_0-1)$.
\end{proposition}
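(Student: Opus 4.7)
The plan is to verify two claims: (i) the listed operators satisfy the defining relations of $R(3)$, and (ii) the eigenvectors of $\hat{C}_{23}$ are the Racah polynomials with the asserted eigenvalues.

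For (ii), I would compare $\mathcal{L}_1$ to the classical second-order difference operator that diagonalises the Racah polynomials, as tabulated in \cite{Koekoek&Lesky&Swarttouw-2010}. Under the parameter identification $(\alpha,\beta,\gamma,\delta)=(\beta_1-\beta_0-1,\beta_2-\beta_1-1,-N-1,\beta_1+N)$, the operator $\mathcal{L}_1$ agrees with that difference operator up to sign and an additive constant. A short computation shows that its eigenvalue on $r_k$ equals $\kappa(0,\beta_2-\beta_0-1)-\kappa(k,\beta_2-\beta_0-1)$, so $\hat{C}_{23}=-\mathcal{L}_1+\kappa(0,\beta_2-\beta_0-1)$ has eigenvalue $\kappa(k,\beta_2-\beta_0-1)$, matching (\ref{spectra1}).

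For (i), the operators $\hat{C}_1,\hat{C}_2,\hat{C}_3,\hat{C}_{123}$ are scalars and hence central, so the commutation statements of Lemma \ref{Commute} hold automatically, while Lemma \ref{Lind} is built into the definition of $\hat{C}_{13}$. Thus only the quadratic relations (\ref{RankoneRacah}) of Lemma \ref{Racah1} with $K=\{1\}$, $L=\{2\}$, $M=\{3\}$ require verification. Using (\ref{Lind1}) to eliminate $\hat{C}_{13}$, these three relations collapse to two: an identity for $[\hat{C}_{12},F]$ and one for $[\hat{C}_{23},F]$, where $2F=[\hat{C}_{12},\hat{C}_{23}]$ by (\ref{B}). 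Since $\hat{C}_{12}$ is multiplication by $\kappa(x,\beta_1)$ and $\hat{C}_{23}$ is a symmetric three-term difference operator, $F$ takes the form $a(x)E_x+b(x)E_x^{-1}+c(x)$ with rational functions $a,b,c$ read off from $\mathcal{L}_1$. Matching the coefficients of $E_x^{\pm 2}$, $E_x^{\pm 1}$ and the multiplicative part on each side then reduces the two identities to polynomial identities in $x$ with parameters $\beta_0,\beta_1,\beta_2,N$.

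The main obstacle is the bookkeeping for this coefficient comparison: the rational coefficients of $\mathcal{L}_1$ are bulky and the double commutators produce many terms. A conceptually lighter alternative is to invoke the classical fact that multiplication by $\kappa(x,\beta_1)$ together with the Racah difference operator generates a realization of the centrally extended Racah algebra (see for instance \cite{Kalnins&Miller&Post-2007, Genest&Vinet&Zhedanov-2014}); one then only needs to align the structure constants with the spectra (\ref{spectra1}), which is transparent in view of Proposition \ref{UnivariateRacah}.
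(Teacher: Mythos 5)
Your proposal is correct and follows essentially the same route as the paper, whose proof consists precisely of ``explicit computation'' of the $R(3)$ relations together with the observation that $\hat{C}_{23}$ is the three-point difference operator of the Racah polynomials; your parameter identification $(\alpha,\beta,\gamma,\delta)=(\beta_1-\beta_0-1,\beta_2-\beta_1-1,-N-1,\beta_1+N)$ and the resulting eigenvalue $\kappa(k,\beta_2-\beta_0-1)$ check out. You merely spell out the coefficient-matching bookkeeping that the paper leaves implicit.
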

\begin{proof}
 After explicit computation one concludes that this is indeed a discrete realization. Note that $C_{23}$ is in fact the three-point difference equation of the Racah polynomials (see also \cite{Genest&Vinet&Zhedanov-2014-3, Koekoek&Lesky&Swarttouw-2010}).
\end{proof}

Generalization of the operator $\mathcal{L}_1$ are given in the following section.

\subsection{Multivariate Racah polynomials and the operators $\mathcal{L}_i$}
M.V. Tratnik generalized the univariate Racah polynomials in \cite{Tratnik-1991}. We will use the definition given in \cite{Geronimo&Iliev-2010}.
\begin{definition}\label{defRacah}
Let $p\leq n$. The multivariate Racah polynomials are given by 
\begin{align*}
 	R_{p}(\vec k; \vec x; \vec \beta ;N) 
 	&=\prod_{j=1}^{p} r_{k_j}(2|\vec k|_{j-1}+\beta_j-\beta_0-1,\beta_{j+1}-\beta_j-1,|\vec k|_{j-1}-x_{j+1}-1, \\
 	& \qquad |\vec k|_{j-1}+\beta_j+x_{j+1},-|\vec k|_{j-1}+x_j) 
 \end{align*}
 with $\vec k:=(k_1,\ldots, k_{n-2}) \in \mathbb{N}^{n-2}$, $\vec x =(x_1, \ldots, x_{n-2})$, $\vec \beta:= (\beta_0, \ldots, \beta_{n-1})\in \mathbb{R}^{n-2}$ and $N:=x_{n-1} \in \mathbb{N}$. We also introduced the notation $|\vec k|_j=\sum_{i=1}^j k_i$ and assume that $| \vec k |_{n-2} \leq N$.
\end{definition}
In Proposition \ref{DiscreteRankOne} we introduced the operator $\mathcal{L}_1$. This operator has the univariate Racah polynomials as eigenvectors. In \cite{Geronimo&Iliev-2010} J. S. Geronimo en P. Iliev defined a set of operators $\{ \mathcal{L}_i \}$. They proved that this set has the multivariate Racah polynomials as common eigenvectors. We introduce them here:
\begin{definition}\label{ShiftOperator} (See also \cite{Geronimo&Iliev-2010})
The Racah operator is the shift operator:
\[ 
\mathcal{L}_{j}=\sum_{\substack{\vec \nu \in \{ -1,0,1\}^{j}\\ \vec \nu\neq \vec 0}} G_{\vec \nu}( E_{\vec \nu}-1).
\]
$E_{\vec \nu}$ is a shift operator defined as follows. Let $E_{x_i}^{\nu_i}(f(x_j))=f(x_j+\delta_{ij}\nu_i)$. Then we define $E_{\vec \nu}=E_{x_1}^{\nu_1}E_{x_2}^{\nu_2}\ldots E_{x_{j}}^{\nu_{j}}$.
The $G_{\vec \nu}$ are rational functions in the variables $x_0, x_1, \ldots , x_{j+1}$ and $\beta_0, \ldots, \beta_{j+1}$ and are defined as follows. We introduce the following functions:
\begin{align*}
B_i^{0,0}&:=x_i(x_i+\beta_i)+x_{i+1}(x_{i+1}+\beta_{i+1})+\frac{(\beta_i+1)(\beta_{i+1}-1)}{2} \\
B_i^{0,1}&:=(x_{i+1}+x_i+\beta_{i+1})(x_{i+1}-x_i+\beta_{i+1}-\beta_i) \\
B_i^{1,0}&:=(x_{i+1}-x_i)(x_{i+1}+x_{i}+\beta_{i+1}) \\
B_i^{1,1}&:=(x_{i+1}+x_i+\beta_{i+1})(x_{i+1}+x_i+\beta_{i+1}+1).
\end{align*}
Let $I_i f(x_i):=f(-x_i-\beta_i)$. We extend $B^{s,t}$ by defining:
\begin{align*}
B_i^{-1,t}&:=I_i(B_i^{1,t}) \\
B_i^{s,-1}&:=I_{i+1}(B_i^{s,1}) \\
B_i^{-1,-1}&:=I_i(I_{i+1}(B_i^{1,1})).
\end{align*}
We also introduce
\begin{align*}
b_i^0&:=(2x_i+\beta_i+1)(2x_i+\beta_i-1) \\
b_i^1&:=(2x_i+\beta_i+1)(2x_i+\beta_i) \\
b_i^{-1}&:=I_{i}(b_i^{1}).
\end{align*}
Let $|\vec \nu|_0$ be the number of zeroes appearing in $\vec \nu$. Then $G_{\vec \nu}$ is
\[
G_{\vec \nu}:=2^{|\vec \nu|_0}\frac{\prod_{i=0}^{j} B_i^{\nu_i,\nu_{i+1}}}{\prod_{i=1}^{j} b_i^{\nu_i}}.
\]
When we need to show explicitly the variables on which the Racah operator depends, we denote
\[
\mathcal{L}_j(x_0,x_1,\ldots,x_{j+1},\beta_0,\ldots,\beta_{j+1},E_{x_1},\ldots,E_{x_j}).
\]
\end{definition} 
The following proposition is Theorem 3.9. in \cite{Geronimo&Iliev-2010}
\begin{proposition}\label{EigenvaluesBispectral}
 For $j \in [p]$ the operators $\mathcal{L}_j$ act as follows on the multivariate Racah polynomials:
 \[
  \mathcal{L}_jR_{p}(\vec k; \vec x; \vec \beta ;N) =-|\vec k|_{j}(|\vec k|_{j}-1+\beta_j-\beta_0)R_{p}(\vec k; \vec x; \vec \beta ;N).
 \]
 The operators $\mathcal{L}_j$ commute with each other. 
\end{proposition}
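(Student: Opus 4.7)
The plan is to reduce the multivariate result to the one-variable case (Proposition \ref{DiscreteRankOne}) by exploiting the product structure of both the polynomial $R_p(\vec k;\vec x;\vec \beta;N)$ and the rational coefficients $G_{\vec \nu}$. I would proceed by induction on $j$, with Proposition \ref{DiscreteRankOne} providing the base $j=1$: a direct comparison of parameters shows that $\mathcal{L}_1$ agrees, up to a constant shift, with the three-term difference operator having $r_{k_1}(2|\vec k|_0+\beta_1-\beta_0-1,\beta_2-\beta_1-1,-x_2-1,\beta_1+x_2;x_1)$ as an eigenfunction with eigenvalue $-k_1(k_1+\beta_1-\beta_0-1)$, matching the stated formula.

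For the inductive step, the key observation is that the rational function $G_{\vec \nu}=2^{|\vec \nu|_0}\prod_i B_i^{\nu_i,\nu_{i+1}}/\prod_i b_i^{\nu_i}$ is designed precisely so that the shifts $E_{x_i}^{\nu_i}$ couple adjacent factors $r_{k_{i-1}}$ and $r_{k_i}$ of the product expansion through the ``boundary'' polynomials $B_i^{\nu_i,\nu_{i+1}}$. I would split the sum defining $\mathcal{L}_j$ according to whether $\nu_j=0$ or $\nu_j\neq 0$: the $\nu_j=0$ terms can be recognised as a copy of $\mathcal{L}_{j-1}$ acting on $R_p$ regarded as a polynomial in $x_1,\ldots,x_{j-1}$ with $x_j$ frozen and effective parameters determined by the $j$-th position, while the $\nu_j=\pm1$ terms shift $x_j$ and hence affect both $r_{k_{j-1}}$ and $r_{k_j}$. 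The plan is to combine these contributions via the contiguous relations for the classical univariate Racah polynomials so that the full expression telescopes into the three-term recurrence for the $r_{k_j}$ factor with effective parameters $2|\vec k|_{j-1}+\beta_j-\beta_0-1$, $\beta_{j+1}-\beta_j-1$, $|\vec k|_{j-1}-x_{j+1}-1$, $|\vec k|_{j-1}+\beta_j+x_{j+1}$, producing the eigenvalue $-|\vec k|_j(|\vec k|_j-1+\beta_j-\beta_0)$.

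The commutativity $[\mathcal{L}_i,\mathcal{L}_j]=0$ would then follow from the fact that the multivariate Racah polynomials form a basis of common eigenvectors: for generic $\vec \beta$ the tuples $\bigl(-|\vec k|_j(|\vec k|_j-1+\beta_j-\beta_0)\bigr)_{j=1}^{p}$ separate distinct $\vec k$, so the joint spectrum is simple and the operators must commute. The main obstacle is the bookkeeping of the $3^j-1$ shift contributions and the verification of the telescoping identity involving $B_i^{s,t}$ and $b_i^s$; this is the lengthy but essentially mechanical calculation carried out in \cite{Geronimo&Iliev-2010}, and requires no new algebraic input beyond the univariate Racah theory already encoded in Proposition \ref{DiscreteRankOne}.
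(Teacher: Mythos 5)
First, note that the paper does not prove this proposition at all: it is imported verbatim as Theorem 3.9 of \cite{Geronimo&Iliev-2010}, so there is no internal argument to compare yours against. Your outline does follow the general shape of the Geronimo--Iliev proof (induction on $j$, exploiting the product structure of $R_p$ and of the coefficients $G_{\vec \nu}$), but as a proof it has a gap exactly where the content lies: the assertion that the $3^j-1$ shift contributions ``telescope into the three-term recurrence for the $r_{k_j}$ factor'' \emph{is} the theorem, and you neither state the contiguous relations you would invoke nor verify the identities among the $B_i^{s,t}$ and $b_i^{s}$ that make the cross terms cancel. Deferring that computation to \cite{Geronimo&Iliev-2010} reproduces the paper's citation rather than supplying a proof. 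The commutativity argument is sound in substance --- a common eigenbasis already forces commutativity on its span, and the orthogonality relation shows the $R_{n-2}(\vec k;\cdot)$ span the functions on the grid $V_x$, so simplicity of the joint spectrum is not even needed --- but you should say explicitly that this only yields commutativity of the operators restricted to that span.

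Second, your base case does not actually check out as written, and this exposes an index issue you should have caught. From Proposition \ref{DiscreteRankOne}, $\hat{C}_{23}=-\mathcal{L}_1+\kappa(0,\beta_2-\beta_0-1)$ has eigenvalue $\kappa(k,\beta_2-\beta_0-1)$ on $r_k$, so
\[
\mathcal{L}_1\, r_k=\bigl(\kappa(0,\beta_2-\beta_0-1)-\kappa(k,\beta_2-\beta_0-1)\bigr)r_k=-k(k+\beta_2-\beta_0-1)\,r_k,
\]
i.e.\ the eigenvalue involves $\beta_2=\beta_{j+1}$, not $\beta_1=\beta_j$. The same indexing is forced by the way the proposition is used in Section \ref{DiscreteRealization}, where $-\mathcal{L}_{m-1}+\kappa(0,\beta_m-\beta_0-1)$ must have eigenvalue $\kappa(|\vec k|_{m-1},\beta_m-\beta_0-1)$. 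So the displayed eigenvalue in the statement should read $-|\vec k|_{j}(|\vec k|_{j}-1+\beta_{j+1}-\beta_0)$, and your claim that the univariate comparison yields $-k_1(k_1+\beta_1-\beta_0-1)$ ``matching the stated formula'' shows the verification was asserted rather than performed.
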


\section{The main Theorem}\label{TheMainTheorem}
Let us summarize the previous sections. Proposition \ref{DiscreteRankOne} shows that there exists a discrete realization of $R(3)$ acting on the univariate Racah polynomials. It also mentions the existence of an operator $\mathcal{L}_1$ having the univariate Racah polynomials as eigenvectors. For both the univariate Racah polynomials and the operator $\mathcal{L}_1$ there exists multivariate generalizations. This seems to suggests that there exists a realization of the higher rank Racah algebra $R(n)$ in terms of difference operators encoding the multispectral properties of the multivariate Racah polynomials. This is the essence of the subsequent Theorem \ref{MainTheorem}. Before we formulate our main Theorem we define an additional isomorphism $\sigma$:
\begin{align*}
  \textup{Alg}[x_0,  \ldots, x_s;\beta_0, \ldots \beta_s; E_{1}, \ldots , E_{s}] &\rightarrow \textup{Alg}[x_1, \ldots, x_{s+1};\beta_1, \ldots \beta_{s+1}; E_{2}, \ldots , E_{s+1}] \\
 \sigma(x_i)&=x_{i+1} \\
 \sigma(\beta_i)&=\beta_{i+1}\\
 \sigma(E_{x_i})&=E_{x_{i+1}} .
 \end{align*}
By Lemma \ref{Lind} the set $\mathcal{B}_2=\{ C_{[i \ldots j]} | i \leq j\}$ is a generating set for $R(n)$. It thus suffices to define these operators in the discrete realization. 
 
 \begin{samepage}
\begin{theorem}\label{MainTheorem}
Using $\kappa(x,\beta)$ given in formula (\ref{kappa}) and $\mathcal{L}_i$ given in Definition \ref{ShiftOperator}, define the following operators:
\begin{align}
C_{[m]}&=\kappa(x_{m-1},\beta_{m-1}) \label{OperatorOne} & \text{ with } \quad 1\leq m  \leq n \\
C_{[2\ldots m+1]}&=-\mathcal{L}_{m-1}+\kappa(0,\beta_m-\beta_0-1) \label{OperatorTwo} & \text{ with } \quad 1\leq m  \leq n-1\\
C_{[p\ldots q]}&=\sigma^{p-2}(C_{[2\ldots q-p+2]}) \label{OperatorThree}, & \text{ with } \quad 2<p\leq q\leq n
\end{align}
and set $x_0=0$ and $x_{n-1}=N$.
The algebra generated by these operators is a discrete realization of $R(n)$.
\end{theorem}
\end{samepage}
The proof of this Theorem is given in Section \ref{DiscreteRealization}. The special case of the Hahn polynomials was previously considered in \cite{Iliev&Xu-2017-2}. The general case was considered in \cite{Iliev-2017}. In this article the same algebra was constructed but in a different way. More specifically, certain generators were given through a fourth order equation in the Racah operators. Theorem \ref{MainTheorem} on the other hand gives explicit expressions for all generators. We first give some examples to illustrate our result.

\begin{example} \textit{The rank one case.}
  We expect to retrieve Proposition \ref{DiscreteRankOne}. The central operators become, knowing that $\mathcal{L}_0=0$,
  \[ C_1=\kappa(0,\beta_0), \quad C_2= \kappa(0,\beta_1-\beta_0-1), \quad C_3=\sigma(C_2)=\kappa(0,\beta_2-\beta_1-1). \]
  and
  \[ C_{[3]}=\kappa(x_2,\beta_2). \]
  The operators $C_{12}$ and $C_{23}$ become
 \begin{align*}
 	C_{12}&=\kappa(x_1,\beta_1), \\
	C_{23}&=-\mathcal{L}_1(0,x_1,x_2,\beta_0,\beta_1,\beta_2,E_{x_1})+\frac{(\beta_2-\beta_0)(\beta_2-\beta_0-2)}{4}.
 \end{align*}
 This is exactly as in Proposition \ref{DiscreteRankOne} with $x=x_1$ and $N=x_2$. 
\end{example}

\begin{example} \textit{The rank two case.}  The algebra $R(4)$ is generated by the following elements:

\begin{center}
\begin{tikzpicture}
	\draw (0,0)--(3.6,0);
	\draw (3.6,0)--(1.8,-3);
	\draw (1.8,-3)--(0,0);
	\draw[dashed] (2.6,0)--(1.3,-3*26/36);
	\draw (0.5,-0.3) node {$C_1$};
	\draw (1.3,-0.3) node {$C_2$};
	\draw (2.1,-0.3) node {$C_3$};
	\draw (3,-0.3) node {$C_4$};
	\draw (0.9,-0.8) node {$C_{12}$};
	\draw (1.7,-0.8) node {$C_{23}$};
	\draw (2.6,-0.8) node {$C_{34}$};
	\draw (1.3,-1.4) node {$C_{123}$};
	\draw (2.25,-1.4) node {$C_{234}$};
	\draw (1.8,-2.1) node {$C_{1234}$};
\end{tikzpicture}
\end{center}
The new elements are on the right-hand side of the dashed line. By Theorem \ref{MainTheorem} we have
\[ C_4:=\kappa(0,\beta_3-\beta_2-1), \quad C_{[4]}=\kappa(x_3,\beta_3), \quad C_{234}=-\mathcal{L}_2+\frac{(\beta_3-\beta_0)(\beta_3-\beta_0-2)}{4}.
\]
The following operator is the interesting one:
\[
C_{34}=\sigma(C_{23})=-\mathcal{L}_1(x_1,x_2,x_3,\beta_1,\beta_2,\beta_3,E_{x_2})+\frac{(\beta_3-\beta_1)(\beta_3-\beta_1-2)}{4}.
\]
Notice that this is a three-point difference operator in the $x_2$ direction compared to $C_{23}$ being a three-point difference operator in the $x_1$ direction. One should note that these operators coincide, up to a gauge factor, with the operators found by S. Post in \cite{Post}. This article also shows that the $R(3)$ sits a number of times inside $R(4)$. This is akin to Proposition \ref{Racah1}.
\end{example}

\section{Multivariate Racah polynomials as connection coefficients}\label{MultivariateRacahPolynomialsAsConnectionCoefficients}

\subsection{The Dunkl model}
Our starting point is the Dunkl model constructed in \cite{DeBie&Genest$vandeVijver&Vinet}. We define the Dunkl-operator as a deformation of the partial derivative $\partial_{y}$:
\begin{align*}
T=\partial_{y}+\frac{\mu}{y}(1-r)
\end{align*}
where the real number $\mu> 0$ is the deformation parameter, $\partial_{y}$ is the partial derivative with respect to $y$ and $r$ is the reflection operator i.e. $rf(y)=f(-y)$.
We realize the algebra $\mathfrak{su}(1,1)$ by introducing the following operators into formula (\ref{su11}).
\[ 
	J_+=\frac{y^2}{2}, \qquad J_-=\frac{T^2}{2} , \qquad A_0=\frac{1}{2}(\mathbb{E}+\gamma)
\]
with $\mathbb{E}=y\partial_y$ and $\gamma=\frac{1}{2}+\mu$. This way we construct a realization of $R(n)$ using Dunkl-operators. Moreover, in this realization the Racah algebra becomes the symmetry algebra of the $\mathbb{Z}_2^n$ Dunkl-Laplacian. The construction is as follows. Let
\begin{equation*}
\begin{aligned}
 T_j&=\underbrace{1\otimes \ldots \otimes1}_{j-1\text{ times}} \otimes T \otimes \underbrace{1\otimes \ldots 	\otimes1}_{n-j \text{ times}} \\
 	&=\partial_{y_j}+\frac{\mu_j}{y_j}(1-r_j).
\end{aligned}
 \end{equation*}
 This operator $T_j$ only acts on the variable $y_j$. The set of operators $T_j$ is the set of $\mathbb{Z}_2^n$ Dunkl-operators.
The $\mathbb{Z}_2^n$ Dunkl-Laplacian becomes
\[
 \Delta=\sum_{j=1}^n T_j^2.
\]
This operator has $n$ deformation parameters $\mu_j$, $j \in [n]$. As the Racah algebra is a symmetry algebra, it preserves $\ker \Delta$, the space of Dunkl-harmonics. Let $\mathcal{H}_a=\ker \Delta \cap \mathcal{P}_a$ be the space of Dunkl-harmonics which are also polynomials of degree $a$. This space $\mathcal{H}_a$ is not an irreducible representation but can be written as a direct sum of irreducible subspaces. 
\[
\mathcal{H}_a=\bigoplus_{\vec \epsilon \in \{0,1\}^n} \mathcal{H}_a^{\vec \epsilon}\quad \text{ with }  \quad r_i\mathcal{H}_a^{\vec \epsilon}=(-1)^{\epsilon_i}\mathcal{H}_a^{\vec \epsilon}.
\]
The space $\mathcal{H}_a^{\vec \epsilon}$ is the space of Dunkl-harmonics of degree $k$ and of odd or even degree in the variable $y_i$ depending on the value of $\epsilon_i$.

Bases for these spaces were constructed such that they diagonalize a labelling Abelian algebra  (see Definition \ref{labelling}). For example, consider the chain $\mathcal{F}: [2] \subset \left[3\right] \subset \ldots \subset [n-1]$. There exists a basis of Dunkl-harmonics diagonalized by the labelling Abelian algebra $\mathscr{Y}_{\mathcal{F}}$ which we will denote by $\{ \psi_{\vec \ell } \}$ with quantum numbers $\vec \ell=(\ell_1,\ldots \ell_{n-2}) \in \mathbb{N}^{n-2}$. According to Proposition $8$ in \cite{DeBie&Genest$vandeVijver&Vinet} we have:
\[
 C_{[m]}\psi_{\vec \ell}=\frac{1}{4}\left(d_m+\gamma_{[m]}\right)\left(d_m+\gamma_{[m]}-2\right)\psi_{\vec \ell}
 \]
with $d_m:=\epsilon_m+\sum_{j=1}^{m-1}( 2\ell_j+\epsilon_j)$. The constants $\gamma_{[m]}$ depend on the deformation parameters and equal 
\[
	\gamma_{[m]}=\frac{m}{2}+\sum_{j=1}^{m} \mu_j.
\]
To make the dependence on the quantum numbers $\vec \ell$ more clear, we introduce the following notation. Let $\beta_{m-1}=\gamma_{[m]}-1+\sum_{j=1}^m\epsilon_j$
and $|\vec \ell |_m=\sum_{j=1}^m \ell_j$. We get
\begin{equation*}
\begin{aligned} 
C_{[m]}\psi_{\vec \ell}&=\left(|\vec \ell |_{m-1}+\frac{\beta_{m-1}+1}{2}\right)\left(|\vec \ell |_{m-1}+\frac{\beta_{m-1}-1}{2}\right)\psi_{\vec \ell} \\
					&=\kappa(|\vec \ell |_{m-1},\beta_{m-1})\psi_{\vec \ell} .
\end{aligned}
\end{equation*}

Consider a second chain : $\mathcal{G}: [2\ldots3] \subset [2 \ldots4] \subset \ldots \subset [2 \ldots n]$. The basis diagonalised by $\mathscr{Y}_{\mathcal{G}}$ is denoted by $\{\varphi_{\vec k}\}$.
We have the following action in the Dunkl model:
\[
C_{[2 \ldots m+1]}\varphi_{\vec k}=\frac{1}{4}\left(d'_m+\gamma_{[2\ldots m]}\right)\left(d'_m+\gamma_{[2\ldots m]}-2\right)\varphi_{\vec k}
\]
with $d'_m:=\epsilon_{m+1}+\sum_{j=1}^{m-1} 2k_j+\sum_{j=2}^{m} \epsilon_j$ and $\epsilon_{n+1}:=\epsilon_{1}$. Using the notation above we get:
\begin{equation}\label{EigenvalueTwo}
\begin{aligned}
C_{[2 \ldots m+1]}\varphi_{\vec k}&=\left(|\vec k |_{m-1}+\frac{\beta_{m}-\beta_0}{2}\right)\left(|\vec k |_{m-1}+\frac{\beta_{m}-\beta_0-2}{2}\right)\varphi_{\vec k}\\
&=\kappa(|\vec k|_{m-1},\beta_{m}-\beta_0-1)\varphi_{\vec k}.
\end{aligned}
\end{equation}
In general the spectrum of the operator $C_{[p \ldots q ]}$ is given in the following lemma:

\begin{lemma}\label{spectra}
The operator $C_{[p \ldots q]}$ is diagonalizable in $\mathcal{H}_a^{\vec \epsilon}$.
 Let $d^*=\sum_{j=1}^{q-p} 2m_j+\sum_{j=p}^{q} \epsilon_j$. Let $  N:= \frac{a -\sum_{j=1}^n \epsilon_j }{2}$.
 The spectrum of $C_{[p \ldots q]}$ in $\mathcal{H}_a^{\vec \epsilon}$ is given by
 \[
  \Big\{ \frac{1}{4}\left(d^*+\gamma_{[p\ldots q]}\right)\left(d^*+\gamma_{[p \dots q]}-2\right) \Big |  |\vec m|_{p-q} \leq N\Big\}
  \]
  or equivalently
  \[
   \Big\{ \kappa(|\vec m|_{q-p},\beta_{q-1}-\beta_{p-2}-1) \Big |  |\vec m|_{p-q} \leq N\Big\}
  \]
  with $\beta_{-1}:=-1$.
\end{lemma}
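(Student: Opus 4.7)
The plan is to exhibit an explicit basis of $\mathcal{H}_a^{\vec\epsilon}$ that diagonalizes $C_{[p\ldots q]}$ and then read off the eigenvalues. I would fix a maximal chain $\mathcal{C}$ of subsets of $[n]$ that contains the nested sequence
\[
[p\ldots p+1]\subsetneq[p\ldots p+2]\subsetneq\cdots\subsetneq[p\ldots q],
\]
for instance the chain obtained by adjoining indices one at a time in the order $1,2,\ldots,p-1,p,p+1,\ldots,q,q+1,\ldots,n$. The labeling Abelian subalgebra $\mathscr{Y}_{\mathcal{C}}$ contains $C_{[p\ldots q]}$, and the construction of \cite{DeBie&Genest$vandeVijver&Vinet} (the very Proposition~8 that produced the formulas for $C_{[m]}\psi_{\vec\ell}$ and $C_{[2\ldots m+1]}\varphi_{\vec k}$ already displayed in this section) yields a simultaneous eigenbasis $\{\chi_{\vec M}\}$ of $\mathcal{H}_a^{\vec\epsilon}$ for $\mathscr{Y}_{\mathcal{C}}$. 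This immediately gives the diagonalizability claim.

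To extract the eigenvalue I would invoke the step-wise Fischer decomposition underlying that construction: the step which determines the action of $C_{[p\ldots q]}$ only ever sees the sub-chain $[p\ldots p+1]\subsetneq\cdots\subsetneq[p\ldots q]$ and the reflection sector $(\epsilon_p,\ldots,\epsilon_q)$. Writing $m_1,\ldots,m_{q-p}$ for the components of $\vec M$ attached to the successive sub-chain sets, the same derivation that produced the formulas recalled above gives
\[
C_{[p\ldots q]}\,\chi_{\vec M}=\tfrac14\bigl(d^{*}+\gamma_{[p\ldots q]}\bigr)\bigl(d^{*}+\gamma_{[p\ldots q]}-2\bigr)\chi_{\vec M},\qquad d^{*}=2|\vec m|_{q-p}+\sum_{j=p}^{q}\epsilon_j,
\]
which is exactly the first form stated.

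The equivalent $\kappa$-form is then pure algebra. Combining $\beta_{m-1}=\gamma_{[m]}-1+\sum_{j=1}^{m}\epsilon_j$ with the additivity $\gamma_{[q]}-\gamma_{[p-1]}=\gamma_{[p\ldots q]}$ gives $\beta_{q-1}-\beta_{p-2}=\gamma_{[p\ldots q]}+\sum_{j=p}^{q}\epsilon_j$, where the convention $\beta_{-1}:=-1$ covers the case $p=1$. Substituting into the eigenvalue and comparing with $\kappa(x,\beta)=(x+\tfrac{\beta+1}{2})(x+\tfrac{\beta-1}{2})$ yields $\kappa(|\vec m|_{q-p},\beta_{q-1}-\beta_{p-2}-1)$. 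Finally, the range $|\vec m|_{q-p}\leq N$ comes from a degree count: the chain quantum numbers $M_B$ are nonnegative integers whose total over the full chain equals $N=\tfrac12(a-\sum_{j=1}^{n}\epsilon_j)$, the ``free'' Fischer degree remaining once the reflection sector is fixed, so the partial sum $|\vec m|_{q-p}$ is bounded by $N$ and every value $0,1,\ldots,N$ is attained by allocating the remaining quantum numbers appropriately.

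The principal difficulty is notational rather than computational: one must check that the closed-form eigenvalue derived in Proposition~8 of \cite{DeBie&Genest$vandeVijver&Vinet}, originally recorded for chain elements of the form $[m]$, specializes verbatim to a general $[p\ldots q]$ in the chain $\mathcal{C}$ chosen above. Once this compatibility is confirmed, the $\kappa$-rewriting and the range bound above complete the proof.
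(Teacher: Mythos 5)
Your overall strategy is the same as the paper's: both arguments lean on the CK-extension/Fischer-decomposition construction of Section 5.2 of \cite{DeBie&Genest$vandeVijver&Vinet}, producing a simultaneous eigenbasis adapted to a maximal chain passing through $[p\ldots q]$ and reading the eigenvalue off the chain quantum numbers. Your $\kappa$-rewriting (via $\beta_{q-1}-\beta_{p-2}=\gamma_{[p\ldots q]}+\sum_{j=p}^{q}\epsilon_j$) and the degree count giving $|\vec m|_{q-p}\leq N$ are correct supplements that the paper's sketch leaves implicit.

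However, the concrete chain you exhibit does not do the job. Adjoining the indices in the order $1,2,\ldots,n$ produces the chain $[1]\subset[2]\subset\cdots\subset[n]$, i.e.\ exactly the chain $\mathcal{F}$ whose labelling Abelian algebra is $\langle C_{[m]}\rangle$; this chain contains $[p\ldots q]$ only when $p=1$. For $p\geq 2$ the operator $C_{[p\ldots q]}$ does not commute with $C_{[m]}$ for $p\leq m<q$ (the sets $[m]$ and $[p\ldots q]$ are then neither nested nor disjoint, so Lemma \ref{Commute} does not apply), and the basis $\{\psi_{\vec\ell}\}$ attached to that chain does \emph{not} diagonalize $C_{[p\ldots q]}$ --- indeed the whole point of Theorem \ref{MainTheorem} is that $C_{[p\ldots q]}$ acts on that basis as a nontrivial difference operator. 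This is precisely why the paper's proof stresses that ``the order in which the variables are added matters'': one must adjoin $y_p,y_{p+1},\ldots,y_q$ \emph{first}, yielding a chain $\{p\}\subset\{p,p+1\}\subset\cdots\subset[p\ldots q]\subset\cdots$, and only afterwards the remaining indices in any order. Your later steps implicitly assume such a chain (you speak of the sub-chain $[p\ldots p+1]\subset\cdots\subset[p\ldots q]$ being ``seen'' by the relevant Fischer step), so the argument is internally inconsistent as written; with the order of adjunction corrected, it goes through and coincides with the paper's proof.
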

\begin{proof}
 We give a sketch of the proof. In Section 5.2 of \cite{DeBie&Genest$vandeVijver&Vinet} a basis diagonalizing a labelling Abelian algebra is constructed using the CK-extension and Fischer decomposition. Using this method one constructs a basis by adding one variable at a time. The order in which the variables are added, matters. If we choose to add variables $y_p, y_{p+1}, \ldots, y_{q}$ first, and then the remaining variables in any order, one obtains a basis diagonalizing $C_{[p \ldots q]}$ and the lemma follows.
\end{proof}

We also give a sketch of the proof of irreducibility of $\mathcal{H}_a^\epsilon$. In \cite{Iliev-2018} one obtains the irreducibility for a different representation of $R(n)$.
\begin{lemma}\label{irreducibility}
The space of Dunkl-harmonics $\mathcal{H}_a^{\vec \epsilon}$ is an irreducible representation for the realization of $R(n)$ in Dunkl operators. 
\end{lemma}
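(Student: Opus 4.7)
The plan is to show that any non-zero $R(n)$-invariant subspace $W\subset\mathcal{H}_a^{\vec\epsilon}$ equals the whole space, by first isolating a single basis vector $\psi_{\vec\ell_0}\in W$ and then using the rank-one Racah subalgebras of Lemma \ref{Racah1} as raising/lowering operators on the multi-index $\vec\ell$.

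For the first step, the joint spectrum of the labelling Abelian algebra $\mathscr{Y}_{\mathcal{F}}$ on $\{\psi_{\vec\ell}\}$ is multiplicity-free: the eigenvalues $\kappa(|\vec\ell|_{m-1},\beta_{m-1})$ for $m=2,\ldots,n-1$ pin down every partial sum $|\vec\ell|_{m-1}$ (the quadratic $\kappa(\cdot,\beta_{m-1})$ is injective on $\mathbb{N}$ for the parameters occurring here, since $\mu_j>0$), and the partial sums determine $\vec\ell$. Hence any $w=\sum c_{\vec\ell}\psi_{\vec\ell}\in W$ can be projected onto $\psi_{\vec\ell_0}$, for any $\vec\ell_0$ in its support, by a Lagrange-interpolation polynomial in the $C_{[m]}$, giving $\psi_{\vec\ell_0}\in W$.

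For the second step, for each $j\in\{2,\ldots,n-1\}$ I apply Lemma \ref{Racah1} with $K=[j-1]$, $L=\{j\}$, $M=\{j+1\}$ (the case $j=n-1$ involving the implicit coordinate $\ell_{n-1}:=N-|\vec\ell|_{n-2}$). A direct check with Lemma \ref{Commute} shows that every generator of $R^{K,L,M}(3)$ commutes with $C_{[m]}$ for $m\neq j$, so this subalgebra preserves each joint eigenspace of $\{C_{[m]}:m\neq j\}$. Inside such an eigenspace only $\ell_{j-1}$ is free, with $\ell_j$ adjusting so that $\ell_{j-1}+\ell_j$ stays fixed, and $C_{KL}=C_{[j]}$ has multiplicity-free spectrum in $\ell_{j-1}$. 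Propositions \ref{UnivariateRacah} and \ref{DiscreteRankOne} then force the complementary generator $C_{LM}=C_{\{j,j+1\}}$ to act tridiagonally in the $\psi$-basis, with off-diagonal entries equal to the non-vanishing coefficients of the classical Racah three-term recurrence.

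Combining the two steps, starting from $\psi_{\vec\ell_0}\in W$, acting with $C_{\{j,j+1\}}$ and reprojecting with $\mathscr{Y}_{\mathcal{F}}$ produces the neighbouring basis vectors obtained from $\vec\ell_0$ by the elementary moves $\ell_{j-1}\mapsto\ell_{j-1}\pm 1,\ \ell_j\mapsto\ell_j\mp 1$ for $j=2,\ldots,n-1$; the boundary move at $j=n-1$ also changes the total $|\vec\ell|_{n-2}$ via the implicit coordinate. Since these moves act transitively on the admissible index set $\{\vec\ell\in\mathbb{N}^{n-2}:|\vec\ell|_{n-2}\leq N\}$, we conclude $W=\mathcal{H}_a^{\vec\epsilon}$. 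The main obstacle is verifying that the tridiagonal off-diagonal entries are non-zero at all interior indices: at the true boundary of the index set the adjacent vector simply does not exist in $\mathcal{H}_a^{\vec\epsilon}$, so the shift is correctly forbidden rather than producing a spurious zero inside the representation.
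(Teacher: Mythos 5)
Your proposal is correct and follows essentially the same route as the paper's (sketched) proof: both exploit the multiplicity-free joint spectrum of the labelling Abelian algebra $\mathscr{Y}_{\mathcal{F}}$ to isolate individual basis vectors, then use the embedded rank-one Racah algebras of Lemma \ref{Racah1} to show that the operators $C_{\{j,j+1\}}$ act tridiagonally on the surviving quantum number and thereby connect all of $\{\psi_{\vec\ell}\}$. Your Lagrange-interpolation projection is just an explicit version of the paper's ``build ladder operators from the tridiagonal operators'' step, and both arguments leave the non-vanishing of the off-diagonal (recurrence) coefficients at the same level of justification.
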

\begin{proof}
Consider the basis $\{ \psi_{\vec \ell}\}$ diagonalizing $\mathscr{Y}_\mathcal{F}$.  When acting with each operator of $\mathscr{Y}_\mathcal{F}$ on a basis element $\psi_{\vec \ell}$ on obtains a set of eigenvalues by Lemma \ref{spectra}
\[
	\big\{ \kappa(|\vec \ell|_{1},\beta_{1}),\kappa(|\vec \ell|_{2},\beta_{2}),\ldots,\kappa(|\vec \ell|_{n-2},\beta_{n-2}) \big\}.
\]
This set is unique for every $\psi_{\vec \ell}$ so the representation is non-degenerate.
Consider now the operator $C_{\{j+1,j+2\}}$. It commutes with all generators in $\mathscr{Y}_\mathcal{F}$ except $C_{[j+1]}$ by Lemma \ref{Commute}. This means, if we take
\[
C_{\{ j+1,j+2\}}\psi_{\vec \ell}=\sum_{\vec \ell'} b_{\vec \ell'}\psi_{\vec \ell'},
\]
that $b_{\vec \ell'} \neq 0$ only if $|\vec \ell|_p=|\vec \ell'|_p$ for $p \in [n] \backslash \{ j\}$. So if one considers $C_{\{ j+1,j+2\}}$ to be an operator acting on the vectors $(|\vec \ell|_{1},|\vec \ell|_{2}\ldots |\vec \ell|_{n-2} )$, it will only change the quantum number $|\vec \ell|_{j}$. When considering the rank one Racah algebra by Lemma \ref{Racah1}:
\begin{center}
\begin{tikzpicture}
\draw (-0.2,0)--(3.8,0);
\draw (3.8,0)--(1.8,-2.6);
\draw (1.8,-2.6)--(-0.2,0);
\draw (0.6,0) node[below] {$C_{[j]}$};
\draw (1.7,0) node[below] {$C_{j+1}$};
\draw (2.8,0) node[below] {$C_{j+2}$};

\draw (1.8,-0.85) node {$C_{[j+1]}  C_{\{ j+1,j+2\}}$};
\draw (1.8, -1.6) node { $C_{[j+2]}$ };
\end{tikzpicture}
\end{center}
one observes that this rank one Racah algebra commutes with all generators in $\mathscr{Y}_\mathcal{F}$ except $C_{[j+1]}$ again by Lemma \ref{Commute}. It will therefore only change the quantum number $|\vec \ell|_{j}$. Repeating the analysis done for the rank one Racah algebra in \cite{Genest&Vinet&Zhedanov-2014-3}, one concludes that $C_{\{ j+1,j+2\}}$ is a tridiagonal operator for the quantum number  $|\vec \ell|_{j}$. In this model, the Dunkl model, one can build ladder operators acting on the quantum number $|\vec \ell|_{j}$ using these tridiagonal operators similarly to Lemma 8 in \cite{DeBie&DeClercq&vandeVijver}. Subsequently one can build, in this vein, starting from a single basis element every other basis element in $\{ \psi_{\vec \ell}\}$ proving the irreducibility.
\end{proof} 

Consider again the bases $\{ \psi_{\vec \ell}\}$ diagonalizing $\mathscr{Y}_\mathcal{F}$ and $\{ \varphi_{\vec k} \}$ diagonalizing $\mathscr{Y}_\mathcal{G}$. The connection coefficients between these two bases are given by the functions $R_{\vec k}$ defined as:
\[
\bra{\varphi_{\vec k}}\ket{\psi_{ \vec \ell}}=:R_{\vec k}(\vec \ell) .
\]
We denoted the connection coefficients using the bra-ket notation. 
It is possible to lift the action of $R(n)$ to the connection coefficients in the following way:
\begin{equation}\label{Lift1}
C_A R_{\vec k}(\vec \ell)= C_A\bra{\varphi_{\vec k}}\ket{\psi_{ \vec \ell}}:=\bra{\varphi_{\vec k}}C_A\ket{\psi_{ \vec \ell}} .
\end{equation}
This definition preserves the action and relations of the higher rank Racah algebra. We want to find expressions for the operators $C_A$ acting as shift operators on the grid formed by $\vec \ell$. As an example consider the operators of $\mathscr{Y}_{\mathcal{F}}$:
\begin{equation}\label{FirstOperator}
C_{[m]}R_{\vec k}(\vec \ell)=\bra{\varphi_{\vec k}}C_{[m]}\ket{\psi_{ \vec \ell}}=\kappa(|\vec \ell |_{m-1},\beta_{m-1})R_{\vec k}(\vec \ell).
\end{equation}
Introduce the notation $x_m:=|\vec \ell |_{m}$. The discrete realization becomes:
\begin{equation}\label{TheoremLineOne}
  C_{[m]}=\kappa(x_{m-1},\beta_{m-1}).
\end{equation}
This is the first set of operators presented in formula (\ref{OperatorOne}) of Theorem \ref{MainTheorem} . To find the discrete realization of the other operators we need two steps. The first step is proving the following Theorem:

\begin{theorem} \label{MultivariateRacah}
Let $\{\psi_{\vec \ell}\}$ be a basis of an irreducible space of Dunkl-harmonics diagonalized by the labelling Abelian algebra $ \mathscr{Y}^{\psi}=\langle C_{[t]}\rangle_{2 \leq t \leq n-1}$  and  $\{ \varphi_{\vec k}\}$ a basis diagonalized by $\mathscr{Y}^{\varphi}= \langle C_{[2\ldots t]}\rangle_{3 \leq t \leq n}$.   The connection coefficients between these bases  $R_{\vec k}(\vec \ell)=\bra{\varphi_{\vec k}}\ket{\psi_{\vec \ell}}$ are up to a gauge factor equal to the multivariate Racah polynomials $R_{n-2}(\vec k; \vec x; \vec \beta ;N)$ with $x_m:=|\vec \ell |_{m}$.
 \end{theorem}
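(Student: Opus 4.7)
The plan is to interpolate between $\{\varphi_{\vec k}\}$ and $\{\psi_{\vec\ell}\}$ by a sequence of intermediate bases so that each consecutive pair differs only through a single embedded rank-one Racah problem. Concretely, for $j=0,1,\dots,n-2$ I introduce the maximal chain
\begin{equation*}
\mathcal{C}_j: [2\dots 3]\subsetneq\dots\subsetneq [2\dots n-j]\subsetneq [n-j]\subsetneq [n-j+1]\subsetneq\dots\subsetneq [n-1],
\end{equation*}
which is valid because $[2\dots n-j]\subsetneq [n-j]$ (the right-hand side also contains~$1$). Then $\mathcal{C}_0=\mathcal{G}$ and $\mathcal{C}_{n-2}=\mathcal{F}$, and $\mathcal{C}_{j-1}$ and $\mathcal{C}_j$ agree outside the size-$(n-j)$ slot, which is filled by $[2\dots n-j+1]$ in the first and by $[n-j]$ in the second. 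Invoking Lemma~\ref{irreducibility}, I fix a basis $\{\chi^{(j)}\}$ of $\mathcal{H}_a^{\vec\epsilon}$ diagonalising $\mathscr{Y}_{\mathcal{C}_j}$, with $\chi^{(0)}=\varphi_{\vec k}$ and $\chi^{(n-2)}=\psi_{\vec\ell}$, and the task reduces to computing each elementary transition $\chi^{(j-1)}\to\chi^{(j)}$.

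For the $j$-th transition I apply Lemma~\ref{Racah1} to the triple $K=\{1\}$, $L=[2\dots n-j]$, $M=\{n-j+1\}$, producing a copy of $R(3)$ that contains the two swapped operators $C_{K\cup L}=C_{[n-j]}$ and $C_{L\cup M}=C_{[2\dots n-j+1]}$. Its remaining generators are $C_1$, $C_{[2\dots n-j]}$, $C_{n-j+1}$ and $C_{[n-j+1]}$: the first and third are central in $R(n)$ by Lemma~\ref{Commute}, while the other two lie in $\mathscr{Y}_{\mathcal{C}_{j-1}}\cap\mathscr{Y}_{\mathcal{C}_j}$; hence all four act as scalars on any common eigenspace of that intersection. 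A non-degeneracy argument in the spirit of Lemma~\ref{irreducibility} then shows that $R^{K,L,M}(3)$ acts irreducibly on such a subspace, so Proposition~\ref{UnivariateRacah} identifies the restricted change of basis as a univariate Racah polynomial, up to a gauge factor.

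To pin down the parameters of this univariate factor I read off the scalar eigenvalues from Lemma~\ref{spectra} and (\ref{FirstOperator}) and match them against the parametrisation (\ref{spectra1}) via the identity $\kappa(x,\beta)=\kappa(0,2x+\beta)$. This yields $\tilde\beta_0=\beta_0$, $\tilde\beta_1=2|\vec k|_{n-j-2}+\beta_{n-j-1}$, $\tilde\beta_2=2|\vec k|_{n-j-2}+\beta_{n-j}$, $\tilde N=|\vec\ell|_{n-j}-|\vec k|_{n-j-2}$, while the diagonalising labels become $\tilde k=k_{n-j-1}$ and $\tilde\ell=|\vec\ell|_{n-j-1}-|\vec k|_{n-j-2}$. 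Substituting into (\ref{ConnRacah}) reproduces exactly the factor indexed by $i=n-j-1$ in the product of Definition~\ref{defRacah}, under the identifications $x_m=|\vec\ell|_m$ and $x_{n-1}=N$. Composing the $n-2$ elementary transitions therefore exhibits $R_{\vec k}(\vec\ell)$ as the multivariate Racah polynomial $R_{n-2}(\vec k;\vec x;\vec\beta;N)$, up to the product of all elementary gauges.

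The main obstacle I anticipate is the bookkeeping underlying the last two paragraphs: verifying at each $j$ that the four listed operators really exhaust the centraliser of $R^{K,L,M}(3)$ inside the common diagonal algebra (so that Proposition~\ref{UnivariateRacah} applies in rank one), and that the parameter shifts $\tilde\beta_i$ and $\tilde N$ stitch together consistently across $j$ so that the consecutive univariate factors compose into Tratnik's closed form without leaving a residual dependence on the intermediate chains $\mathcal{C}_j$. Once those identifications are in place, the accumulated elementary gauges combine into the single overall gauge factor allowed by the statement.
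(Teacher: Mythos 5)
Your proposal is correct and follows essentially the same route as the paper: a telescoping sequence of intermediate labelling Abelian algebras, each consecutive pair differing by a single generator swap governed by an embedded rank-one algebra $R^{\{1\},[2\ldots m],\{m+1\}}(3)$, with Proposition \ref{UnivariateRacah} applied on the common eigenspaces and the parameters matched via Lemma \ref{spectra}; your substitutions $\tilde\beta_i$, $\tilde N$, $\tilde\ell$, $\tilde k$ agree with the paper's (\ref{ConnSubs}) after reindexing. The only differences are cosmetic (you interpolate from $\varphi$ to $\psi$ rather than the reverse), so the argument stands as the paper's own.
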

 When we have proven that the connection coefficients between these bases are indeed the multivariate Racah polynomials, we will show that the action of $C_A$ coincides with the action of a Racah operator. This will prove Theorem \ref{MainTheorem}. We also dedicate a section to the gauge factors as this depends on how the chosen bases have been normalized.

\subsection{ Proof of Theorem \ref{MultivariateRacah}}
\begin{proof}
We break down the construction of the connection coefficients in small steps using intermediate bases diagonalised by labelling Abelian algebras. The starting basis and algebra are $\mathscr{B}_0:=\{ \psi_{\vec \ell}\}$ and $\mathscr{Y}_0:=\mathscr{Y}^\psi=\langle C_{[m]} \rangle$. The finishing basis and algebra are $\mathscr{B}_{n-2} :=\{ \varphi_{ \vec k}\}$ and  $\mathscr{Y}_{n-2}:=\mathscr{Y}^\varphi=\langle C_{[2 \ldots m]} \rangle$. The intermediate labelling Abelian algebras are constructed in such a way that two subsequent algebras differ by only one generator:
\[ 
\mathscr{Y}_{j}:=\langle C_{23},C_{234}, \ldots, C_{[2\ldots j+2]}, C_{[j+2]}, \ldots C_{[n-1]}\rangle 
\]
and we associate to this algebra the diagonalised basis $\mathscr{B}_j$.  The following diagram shows the different intermediate steps. In the upper row, one finds the intermediate bases $\mathcal{B}_j$. Underneath each basis $\mathcal{B}_j$ the generators of each related labelling Abelian algebra $\mathscr{Y}_j$ are provided. The generators by which two subsequent labelling Abelian algebras differ, are indicated by a box.

\begin{tikzpicture}[->]
 \matrix(M)[matrix of nodes, column sep=0 cm, row sep=0.2cm]{
  $\mathcal{B}_0$ & $\mathcal{B}_1$ & $\mathcal{B}_2$	& $\mathcal{B}_3$		& $\cdots$ & $\mathcal{B}_{n-4}$	& $\mathcal{B}_{n-3}$ 	& $\mathcal{B}_{n-2}$	\\
  $C_{12}$ 	& $C_{23}$	& $C_{23}$ 			& $C_{23}$			& $\cdots$ & $C_{23}$			& $C_{23}$			& $C_{23}$			\\
  $C_{123}$ 	& $C_{123}$	& $C_{234}$			& $C_{234}$			& $\cdots$ & $C_{234}$			& $C_{234}$			& $C_{234}$			\\
  $C_{1234}$ 	& $C_{1234}$	& $C_{1234}$			& $C_{2345}$			& $\cdots$ & $C_{2345}$			& $C_{2345}$			& $C_{2345}$			\\
  \vdots 		& \vdots 		& \vdots				& \vdots				& $\ddots$ & $\vdots$			& $\vdots$			& \vdots				\\
  $C_{[1\ldots n-1]}$	&$C_{[1\ldots n-1]}$	& $C_{[1\ldots n-1]}$		& $C_{[1\ldots n-1]}$		& $\cdots$ & $C_{[1\ldots n-2]}$		& $C_{[2\ldots n-1]}$		& $C_{[2\ldots n-1]}$		\\
  $C_{[n-1]}$	& $C_{[n-1]}$	& $C_{[n-1]}$			& $C_{[n-1]}$			& $\cdots$ & $C_{[n-1]}$			& $C_{[n-1]}$			& $C_{[2\ldots n]}$			\\
  };
 \path (M-1-1) edge[bend left] (M-1-2); 
 \path (M-1-2) edge[bend left] (M-1-3);
 \path (M-1-3) edge[bend left] (M-1-4);
 \path (M-1-6) edge[bend left] (M-1-7);
 \path (M-1-7) edge[bend left] (M-1-8);
 \node[draw=black, rounded corners=1ex, fit=(M-2-1)(M-2-2),inner sep=1pt]{};
 \node[draw=black, rounded corners=1ex, fit=(M-3-2)(M-3-3),inner sep=1pt]{};
 \node[draw=black, rounded corners=1ex, fit=(M-4-3)(M-4-4),inner sep=1pt]{};
 \node[draw=black, rounded corners=1ex, fit=(M-6-6)(M-6-7),inner sep=1pt]{};
 \node[draw=black, rounded corners=1ex, fit=(M-7-7)(M-7-8),inner sep=1pt]{};
 \end{tikzpicture}

Let us perform the first step. We want to find the connection coefficients between 
$\mathscr{B}_0$ and $\mathscr{B}_1$. Instead of considering the full space the Racah algebra acts on, consider the eigenspaces of $\mathscr{A}_1:=\mathscr{Y}_0\cap\mathscr{Y}_1$. This set $\mathscr{A}_1$ is generated by the operators $\{ C_{[m]} \}_{m \geq 3} $. The algebra $R^{1,2,3}(3):=\langle C_1,C_2,C_3,C_{12},C_{23},C_{123}\rangle$ commutes with $\mathscr{A}_1$ and therefore preserves its eigenspaces. Observe that $R^{1,2,3}(3)$ is a rank one Racah algebra:
\begin{center}
 \RacahOne{C_1}{C_2}{C_3}{C_{12}}{C_{23}}{C_{123}}
 \end{center}
The operators $C_{12}$ and $C_{23}$ sitting in this algebra, diagonalize $\mathscr{B}_0$ and $\mathscr{B}_1$ respectively. According to Proposition \ref{UnivariateRacah} the connection coefficients are up to a gauge constant equal to 
\[ 
 r_{k_1}(\beta_1-\beta_0-1,\beta_{2}-\beta_1-1,-x_{2}-1, \beta_1+x_{2};x_1)
 \]
 by formula (\ref{ConnRacah}). We derive the constants $\beta_0$, $\beta_1$, $\beta_2$, $x_1$, $x_2$ and $k_1$ from the eigenvalues of the generators of $R^{1,2,3}(3)$ by Lemma \ref{spectra}:
\begin{align*}
   C_1&:\kappa(0,\beta_0) \\
   C_2&:\kappa(0,\beta_1-\beta_0-1)\\
   C_3&:\kappa(0,\beta_2-\beta_1-1) \\
   C_{12}&: \kappa(x_1,\beta_1) \\
   C_{23}&:\kappa(k_1,\beta_2-\beta_0-1) \\
   C_{123}&:\kappa(x_2,\beta_2).
\end{align*}

Using this first step we construct the connection coefficients between any two successive steps. Consider $\mathscr{A}_j:=\mathscr{Y}_{j-1}\cap \mathscr{Y}_{j}$. Then $\mathscr{A}_j$ is generated by $\{ C_{[m]} \}_{m>j+1} \cup \{ C_{[2\ldots m]} \}_{m\leq j+1}$. Consider the eigenspaces of $\mathscr{A}_j$. The rank one Racah algebra $R^{1,[2\ldots j+1],j+2}(3):=\langle C_1, C_{[2\ldots j+1]}, C_{j+2}, C_{[j+1]}, C_{[2\ldots j+2]} ,C_{[j+2]}\rangle$ commutes with $\mathscr{A}_j$. We represent this graphically as
\begin{center}
\begin{tikzpicture}
\draw (0,0)--(3.6,0);
\draw (3.6,0)--(1.8,-2.6);
\draw (1.8,-2.6)--(0,0);
\draw (0.5,0) node[below] {$C_1$};
\draw (1.6,0) node[below] {$C_{[2\ldots j+1]}$};
\draw (2.8,0) node[below] {$C_{j+2}$};

\draw (1.8,-0.9) node {$C_{[ j+1]}  C_{[2\ldots j+2]}$};
\draw (1.8, -1.6) node { $C_{[j+2]}$ };
\end{tikzpicture}
\end{center}

This algebra therefore preserves these eigenspaces of $\mathscr{A}_j$. The operators $C_{[j+1]}$ and $C_{[2\ldots j+2]}$ sitting in this algebra, diagonalize $\mathscr{B}_{j-1}$ and $\mathscr{B}_j$ respectively. According to Proposition \ref{UnivariateRacah} the connection coefficients must therefore be, up to a gauge factor, the Racah polynomials by formula (\ref{ConnRacah})
\[  
r_{k'_1}(\beta'_1-\beta'_0-1,\beta'_{2}-\beta'_1-1,-x'_{2}-1, \beta'_1+x'_{2};x'_1).
\]
We find the variables $\beta'_0$, $\beta'_1$, $\beta'_2$, $x'_1$, $x'_2$ and $k'_1$ by comparing the eigenvalues of the generators of $R^{1,[2\ldots j+1],j+2}(3)$, see Lemma \ref{spectra}, with the eigenvalues of the generators of $R^{1,2,3}(3)$ given in formula (\ref{spectra1}).
\begin{align*}
   C_1&:                         &      \kappa(0,\beta_0) &=\kappa(0,\beta'_0)  \\
   C_{[2\ldots j+1]}&:      &      \kappa(|\vec k|_{j-1},\beta_j-\beta_0-1) &=\kappa(0,\beta'_1-\beta'_0-1) \\
   C_{j+2}&:                    &      \kappa(0,\beta_{j+1}-\beta_j-1)&=\kappa(0,\beta'_2-\beta'_1-1) \\
   C_{[j+1]}&:                  &      \kappa(x_j,\beta_j)&= \kappa(x'_1,\beta'_1) \\
   C_{[2\ldots j+2]}&:       &      \kappa(|\vec k|_{j},\beta_{j+1}-\beta_0-1)&=\kappa(k'_1,\beta'_2-\beta'_0-1) \\
   C_{[j+2]}&:                   &      \kappa(x_{j+1},\beta_{j+2})&=\kappa(x'_2,\beta'_2).
\end{align*}

Solving this set of equations we find:
\begin{equation}\label{ConnSubs}
\begin{aligned}
\beta'_0&=\beta_0 &  \beta'_1&=2| \vec k|_{j-1}+\beta_j &  \beta'_2&=2| \vec k|_{j-1}+\beta_{j+1} \\
x'_1&=x_j-| \vec k|_{j-1} &   x'_2&=x_{j+1}-| \vec k|_{j-1} & k'_1&=k_j.
\end{aligned}
\end{equation}

Hence, the connection coefficients become 
\begin{equation*}
r_{k_j}(2|\vec k|_{j-1}+\beta_j-\beta_0-1,\beta_{j+1}-\beta_j-1,|\vec k|_{j-1}-x_{j+1}-1,|\vec k|_{j-1}+\beta_j+x_{j+1},-|\vec k|_{j-1}+x_j) 
\end{equation*}
which are exactly the factors of the multivariate Racah polynomials $R_{n-2}(\vec k; \vec x; \vec \beta ;N)$ as given in Definition \ref{defRacah}.
\end{proof}
In \cite{Iliev&Xu-2017} connection coefficients between bases in terms of Jacobi polynomials were obtained using a different direct method. These connection coefficients turned out to be multivariate Racah polynomials. This is a result similar to the theorem proven here as the bases of Dunkl-harmonics are also expressed in terms of Jacobi polynomials (See Section 5.3 in \cite{DeBie&Genest$vandeVijver&Vinet}).

\subsection{Gauge coefficients}
If we assume the bases $\psi_{\vec \ell }$ and $\varphi_{\vec k}$ to be normalized, the connection coefficients are normalized too up to a phase constant. They satisfy
\[
	\sum_{\vec k \in V_k} R_{\vec \ell'}( \vec k)R_{\vec k}(\vec \ell)=\delta_{\vec \ell, \vec \ell'}
\]
with 
\[
 V_k=\{ \vec k \in \mathbb{N}_0^{n-2}   |\, |\vec k|_{n-2} \leq N \}.
\]
This enables us to write down the gauge coefficients explicitly. We compare the orthogonality relation of the connection coefficients with the orthogonality relation of the multivariate Racah polynomials. The orthogonality relation of the multivariate Racah polynomials can be found by writing the multivariate Racah polynomials as a product of univariate Racah polynomials as we have done in the proof of Theorem \ref{MultivariateRacah}. We normalize each factor considering the orthogonality relation (\ref{ConnOrtho}). In this relation substitute as in formula (\ref{ConnSubs}). This leads to the following: 
\begin{align*}
  \rho_j=\frac{\Gamma(\beta_j+x_j+| \vec k|_{j-1})\Gamma(\beta_j-\beta_0+x_j+| \vec k|_{j-1})}{\Gamma(\beta_0+1+x_j-| \vec k|_{j-1})(x_j-| \vec k|_{j-1})!}(\beta_j+2x_j)  \\\times\frac{\Gamma(\beta_{j+1}+x_{j+1}+x_{j})\Gamma(\beta_{j+1}-\beta_j+x_{j+1}-x_j)}{\Gamma(\beta_j+x_{j+1}+1+x_j)(x_{j+1}-x_j)!}
\end{align*}
and
\begin{align*} 
\frac{1}{\lambda_{j}}:=\frac{\Gamma(2| \vec k|_{j-1}+k_j+\beta_{j+1}-\beta_0-1)}{\Gamma(k_j+\beta_{j+1}-\beta_j)\Gamma(| 2\vec k|_{j-1}+n_j+\beta_j-\beta_0)n_j!}(2| \vec k|_{j}+\beta_{j+1}-\beta_0-1)\\
\times\frac{\Gamma(\beta_0+x_{j+1}+1-| \vec k|_{j})(x_{j+1}-| \vec k|_{j})!}{\Gamma(| \vec k|_{j-1}+\beta_{j+1}-\beta_0+x_{j+1})\Gamma(\beta_{j+1}+x_{j+1}+| \vec k|_{j})} .
\end{align*}
By multiplying these weights $\rho_j$ and factors $\lambda_{j}$ we obtain the following weight for the multivariate Racah polynomials:
\[
\frac{\prod_{j=1}^p \rho_j}{\prod_{j=1}^p \lambda_j}=\omega_p(\vec x)\mu_p(\vec k)
\]
with
\[
\omega_p(\vec x)= \prod_{j=0}^p \frac{\Gamma(\beta_{j+1}+x_{j+1}+x_{j})\Gamma(\beta_{j+1}-\beta_j+x_{j+1}-x_j)}{\Gamma(\beta_j+x_{j+1}+1+x_j)(x_{j+1}-x_j)!} \prod_{j=1}^p (\beta_j+2x_j) 
\]
and
\begin{align*}
 \mu_p(\vec k)= \prod_{j=1}^p \frac{\Gamma(2| \vec k|_{j-1}+k_j+\beta_{j+1}-\beta_0-1)(2| \vec k|_{j}+\beta_{j+1}-\beta_0-1)}{\Gamma(k_j+\beta_{j+1}-\beta_j)\Gamma(| 2\vec k|_{j-1}+k_j+\beta_j-\beta_0)k_j!} \times \\
 \frac{\Gamma(\beta_0+N+1-| \vec k|_{p})(N-| \vec k|_{p})!}{\Gamma(\beta_{p+1}-\beta_0+N+| \vec k|_{p})\Gamma(\beta_{p+1}+N+| \vec k|_{p})}.
 \end{align*} 
 The expression for $\omega_p(\vec x)$ is similar to formula (3.8) in \cite{Geronimo&Iliev-2010}.
These formulas lead to
 \[
  \sum_{\vec x \in V_x} \omega_{n-2}(\vec x)\mu_{n-2}(\vec k) R_{n-2}(\vec k; \vec x ; \vec \beta;N)R_{n-2}(\vec k'; \vec x ; \vec \beta;N)=\delta_{\vec k,\vec k'}
 \]
 with
 \[ 
 V_x=\{ \vec x \in \mathbb{N}_0^{n-2}  | 0\leq x_1 \leq x_2 \leq \ldots \leq x_{n-2} \leq N \}.
 \]
 Notice that $V_x=V_k$.
The connection coefficients become
\begin{equation}
 R_{\vec k}(\vec \ell)=\sqrt{\omega_{n-2}(\vec x)\mu_{n-2}(\vec k)}R_{n-2}(\vec k; \vec x ; \vec \beta;N)
\end{equation}
with gauge coefficient  $\sqrt{\omega_{n-2}(\vec x)\mu_{n-2}(\vec k)}$ and $x_i:=|\vec l|_i$.

\section{A discrete realization}\label{DiscreteRealization}
As explained before in formula (\ref{Lift1}) we lift the action of the higher rank Racah algebra $R(n)$ from the Dunkl model to the connection coefficients by the following:
\[
   C_AR_{\vec k}(\vec \ell)=C_A\bra{\varphi_{\vec k}}\ket{\psi_{ \vec \ell}}:=\bra{\varphi_{\vec k}}C_A\ket{\psi_{ \vec \ell}} .
\]
From this point on we assume that the bases of Dunkl-harmonics $\psi_{ \vec \ell}$ and $\varphi_{ \vec k}$ are normalized so we know the explicit expression of the connection coefficients. Similarly to formula (\ref{FirstOperator}) we can now find the discrete realization of $C_{[2\ldots m+1]}$. We know the action of $C_{[2\ldots m+1]}$ on $\varphi_{ \vec k}$ by formula (\ref{EigenvalueTwo}).
\begin{align*}
 C_{[2\ldots m+1]}R_{\vec k}(\vec \ell)&=\bra{\varphi_{\vec k}}C_{[2\ldots m]}\ket{\psi_{ \vec \ell}} \\
 						&=\kappa(|\vec k|_{m-1},\beta_{m}-\beta_0-1)\bra{\varphi_{\vec k}}\ket{\psi_{ \vec \ell}}  \\
						&=\kappa(|\vec k|_{m-1},\beta_{m}-\beta_0-1)R_{\vec k}(\vec \ell).
\end{align*}
We need an operator with eigenvalues $\kappa(|\vec k|_{m-1},\beta_{m}-\beta_0-1)$ and eigenvectors the multivariate Racah polynomials which only acts on the $\vec x$ variable or equivalently acting only on $\vec \ell$ with $x_m=|\vec \ell|_{m}$. The Racah operator $\mathcal{L}_{m-1}$ is a good candidate. We adjust for the eigenvalues given in Proposition \ref{EigenvaluesBispectral}. This yields
\begin{align*}
\big(-\mathcal{L}_{m-1}(\vec x,\vec \beta,N)+\kappa(0,\beta_m-\beta_0-1)\big)R_{n-2}(\vec k; \vec x ; \vec \beta;N)\\
   =	\kappa(|\vec k|_{m-1},\beta_{m}-\beta_0-1)R_{n-2}(\vec k; \vec x ; \vec \beta;N).
\end{align*}
Taking into account the gauge factors the operator $C_{2 \ldots m+1}$ becomes:
\begin{equation}\label{GaugedTwo}
 C_{[2\ldots m+1]}=\sqrt{\omega_{n-2}(\vec x)}\big(-\mathcal{L}_{m-1}(\vec x,\vec \beta,N)+\kappa(0,\beta_m-\beta_0-1)\big)\sqrt{\omega_{n-2}(\vec x)}^{-1}.
\end{equation}

We return to the general case. Consider a general operator $C_A$. Let $ \ket{ \phi_{\vec m}}$ be a normalized basis such that $C_A$ acts diagonally on this basis. We let this operator act on $R_{\vec k}(\vec \ell)=\bra{ \psi_{\vec k}}\ket{\varphi_{\vec \ell}}$. This yields
\begin{equation}\label{MainTrick}
\begin{aligned} 
 C_A\bra{ \varphi_{\vec k}}\ket{\psi_{\vec \ell}}&=\sum_{\vec m} \bra{ \varphi_{\vec k}}\ket{ \phi_{\vec m}}   \bra{ \phi_{\vec m}}C_A\ket{ \psi_{\vec \ell}} \\
&=\sum_{\vec m} \bra{ \varphi_{\vec k}}\ket{ \phi_{\vec m}} \nu(\vec m)\bra{ \phi_{\vec m}}\ket{ \psi_{\vec \ell}} \\
&=\sum_{\vec m} \bra{ \varphi_{\vec k}}\ket{ \phi_{\vec m}} T_{\vec \ell}^{\phi}\bra{ \phi_{\vec m}}\ket{ \psi_{\vec \ell}} \\
&=T_{\vec \ell}^\phi \sum_{\vec m} \bra{ \varphi_{\vec k}}\ket{ \phi_{\vec m}} \bra{ \phi_{\vec m}}\ket{ \psi_{\vec \ell}} \\
&=T_{\vec \ell}^\phi \bra{ \varphi_{\vec k}}\ket{\psi_{\vec \ell}}.
\end{aligned}
\end{equation}
Here we introduced the operator $T_{\vec \ell}^\phi$. We will show its existence and will give its explicit expression later on. For now we have the following three assumptions.  First, this is an operator acting on functions on the grid $\vec \ell$. Second, it has  $\bra{ \phi_{\vec m}}\ket{ \psi_{\vec \ell}}$ as eigenvectors and eigenvalues $\nu(\vec m)$. Third and most importantly to note, we assume that $T_{\vec \ell}^\phi$ is independent of the grid $\vec m$. This means that we can pull the operator $T_{\vec \ell}^\phi$ out of the sum. In the remainder of this section we will choose $\ket{\phi_{\vec m}} $ so that $\bra{ \phi_{\vec m}}\ket{ \psi_{\vec \ell}}$ is a multivariate Racah polynomial. This way the $T_{\vec \ell}^\phi$ are the Racah operators $\mathcal{L}_i$ up to a constant and the three assumptions will be fullfilled.

\subsection{The operator $C_{34}$}
Before tackling the problem of finding the discrete realization of $R(n)$, we restrict ourselves to the easiest non-trivial case: the rank two case $R(4)$. The generators of the algebra are illustrated in the following diagram
\begin{center}
 \begin{tikzpicture}
 	\draw (0,0)--(3.6,0);
	\draw (3.6,0)--(1.8,-3);
	\draw (1.8,-3)--(0,0);
	\draw (0.6,-0.3) node {$C_1$};
	\draw (1.4,-0.3) node {$C_2$};
	\draw (2.2,-0.3) node {$C_3$};
	\draw (3,-0.3) node {$C_4$};
	\draw (1,-0.8) node {$C_{12}$};
	\draw (1.8,-0.8) node {$C_{23}$};
	\draw (2.6,-0.8) node {$C_{34}$};
	\draw (1.4,-1.4) node {$C_{123}$};
	\draw (2.2,-1.4) node {$C_{234}$};
	\draw (1.8,-2.0) node {$C_{1234}$};
 \end{tikzpicture}
\end{center}
The discrete realization is known for all operators in the diagram but one: $C_{34}$. Indeed, we have by Lemma \ref{spectra} and formula (\ref{TheoremLineOne}) 
\[ C_{i+1}=\kappa(0,\beta_i-\beta_{i-1}-1), \qquad C_{[i+1]}=\kappa(x_i,\beta_i) \] 
   and the operators $C_{23}$ and $C_{234}$ are given by formula (\ref{GaugedTwo}). 
 As explained by formula (\ref{MainTrick}), we introduce an intermediate basis $\{\phi_{\vec m}\}$ of normalized eigenvectors of $C_{34}$ in the space of Dunkl-harmonics. Moreover, we consider this basis to be diagonalized by $\mathscr{Y}^\phi:=\langle C_{12},C_{34}\rangle $. Such a basis exists as $C_{12}$ and $C_{34}$ commute and the eigenvalues are explicitly known in the Dunkl model according to Lemma \ref{spectra} making these operators simultaneously diagonizable. Notice that $\mathscr{Y}^\phi$ is not a labelling Abelian algebra. We want to find the connection coefficients $\bra{ \phi_{\vec m}}\ket{ \psi_{\vec \ell}}$ with $\{\psi_{\vec \ell}\}$ the basis diagonalizing $\mathscr{Y}^\psi:=\langle C_{12},C_{123}\rangle $. Consider the following diagram:
\begin{center}
\begin{tikzpicture}[->]
 \matrix(M)[matrix of nodes, column sep=0.5 cm, row sep=0.2cm]{
  $\{ \psi_{ \vec \ell}\}$ & $\{ \phi_{ \vec m}\}$\\
  $C_{12}$ 	& $C_{12}$		\\
  $C_{123}$ 	& $C_{34}$		\\
   };
 \path (M-1-1) edge[bend left] (M-1-2); 
 \node[draw=black, rounded corners=1ex, fit=(M-3-1)(M-3-2),inner sep=1pt]{};
  \end{tikzpicture}
\end{center}
In the columns of this diagram one finds each basis and the operators they diagonalize. Notice that $\mathscr{Y}_\psi$ and $\mathscr{Y}_\phi$ differ by only one element which we indicate with the box. Consider the eigenspaces of $\langle C_{12}\rangle=\mathscr{Y}_\psi\cap\mathscr{Y}_\phi$. The operators $C_{123}$ and $C_{34}$ commute with $C_{12}$ and therefore preserve these eigenspaces. The operators $C_{123}$ and $C_{34}$ are also generators of the rank one Racah algebra $R^{12,3,4}(3)$ by Lemma \ref{Racah1}:
\begin{center}
 \RacahOne{C_{12}}{C_3}{C_4}{C_{123}}{C_{34}}{C_{1234}}
 \end{center}
Every element of this algebra commutes with $C_{12}$ so the algebra $R^{12,3,4}(3)$ preserves the eigenspaces of $C_{12}$. By Proposition \ref{UnivariateRacah} the connection coefficients are 
univariate Racah polynomials. To find the explicit form of these polynomials we use the isomorphism $ R^{12,3,4}(3) \cong R^{1,2,3}(3)$. This yields
\begin{align*}
   C_{12}&:                         &      \kappa(x_1,\beta_1) &=\kappa(0,\beta'_0)  \\
   C_{3}&:      &     \kappa(0,\beta_2-\beta_1-1)&=\kappa(0,\beta'_1-\beta'_0-1)\\
   C_{4}&:                    &      \kappa(0,\beta_3-\beta_2-1)&= \kappa(0,\beta'_2-\beta'_1-1) \\
   C_{123}&:                  &      \kappa(x_2,\beta_2)&= \kappa(x'_1,\beta'_1) \\
   C_{34}&:       &      \kappa(m_2,\beta_{3}-\beta_1-1)&=\kappa(k'_1,\beta'_2-\beta'_0-1) \\
   C_{1234}&:                   &      \kappa(x_{3},\beta_{3})&=\kappa(x'_2,\beta'_2).
\end{align*}
Solving this set of equations yields:
\begin{equation}\label{C34substitution}
\begin{aligned}
\beta'_0&=\beta_1+2x_1 &   \beta'_1&=\beta_2+2x_1 &   \beta'_2&=\beta_3+2x_1 \\
x'_1&=x_2-x_1 &  x'_2&=x_3-x_1 &  k'_1&=m_2.
\end{aligned}
\end{equation}
Performing these substitutions gives the appropriate connection coefficients as
\[
 \bra{ \phi_{\vec m}}\ket{\psi_{\vec \ell}}=\sqrt{\omega_{1}(\vec x')\mu_{1}(\vec k')}R_{1}(\vec k'; \vec x' ; \vec \beta';N)\delta_{m_1,\ell_1}.
\]
This also allows to write $C_{34}$ down. The following operator has the same eigenvalues and eigenvectors as $C_{34}$:
\begin{equation}\label{C34operator}
\sqrt{\omega_{1}(\vec x')}\big(-\mathcal{L}_1(0, x'_1, x'_2,\beta'_0,\beta'_1,\beta'_2,E_{x'_1})+\kappa(0,\beta'_2-\beta'_0-1)\big)\sqrt{\omega_{1}(\vec x')}^{-1}.
\end{equation}
The operator $E_{x'_1}$ equals $E_{x_2}$ as it adds $1$ to $x'_1$ but keeps the $\beta'_i$ fixed. Therefore $C_{34}$ is a three-point shift operator in the $x_2$ direction. The gauge coefficients $\omega_1(\vec x')$ differ from the gauge coefficients $\omega_2(\vec x)$ of the operators $C_{[23]}$ and $C_{[234]}$. Introduce therefore the following function:
\[ 
f:=\frac{\Gamma(\beta_{1}+x_{1})\Gamma(\beta_{1}-\beta_0+x_{1})}{\Gamma(\beta_0+x_{1}+1)x_{1}!}(\beta_1+2x_1).
\]
One checks easily that $f\omega_1(\vec x')=\omega_2(\vec x)$. Multiplying $C_{34}$ on the left by $\sqrt{f}$ and on the right by $\sqrt{f}^{-1}$ does not change the operator as $f$ does not depend on $x_2$. Finally we have the following observation. Consider the Racah operator in expression (\ref{C34operator}) and perform the substitutions (\ref{C34substitution}):
\[
 	\mathcal{L}_1(0, x_2-x_1, x_3-x_1,\beta_1+2x_1,\beta_1+2x_1,\beta_3+2x_1,E_{x_2}).
\]
This operator equals
\[
 	\mathcal{L}_1(x_1, x_2, x_3,\beta_1,\beta_2,\beta_3,E_{x_2}).
\]
This can be seen by considering the explicit expression for these Racah operators (see Definition \ref{ShiftOperator}) and observing that $B'^{p,q}_i=B^{p,q}_{i+1}$ and $b'^p_i=b^p_{i+1}$.
Hence, the operator $C_{34}$ equals
\[
 C_{34}=\sqrt{\omega_{2}(\vec x)}\big(-\mathcal{L}_1(x_1, x_2, x_3,\beta_1,\beta_2,\beta_3,E_{x_2})+\kappa(0,\beta_3-\beta_1-1)\big)\sqrt{\omega_{2}(\vec x)}^{-1}.
\]
In conclusion, we have for every generator in the algebra $R(4)$ an explicit expression. To get rid of the gauge coefficients, we conjugate every operator by $\sqrt{\omega_2(\vec x)}$. Conjugation and the lifting from the Dunkl-model preserve the algebra relations. This means the discrete operators we constructed here generate the rank $2$ Racah algebra. Moreover,  the conjugated operators $C^c_{23}$ and $C^c_{34}$ are related:
\[
 C^c_{34}=   -\mathcal{L}_1(x_1, x_2, x_3,\beta_1,\beta_2,\beta_3,E_{x_2})+\kappa(0,\beta_3-\beta_1-1)
\]
and
\[ 
C^c_{23}=-\mathcal{L}_1(x_0, x_1, x_2,\beta_0,\beta_1,\beta_2,E_{x_1})+\kappa(0,\beta_2-\beta_0-1)
\]
where we have not yet set $x_0=0$. Observe that $C^c_{34}=\sigma(C^c_{23})$. 
This proves the main Theorem \ref{MainTheorem} for the rank two Racah algebra $R(4)$.

\subsection{The general operator $C_{[i+2\ldots j+i+2]}$}
We return to the general case $R(n)$. Before we start considering the operator $C_{[i+2\ldots i+j+2]}$ with $i \geq 1$, we need to introduce the following lemma. This lemma generalizes Lemma \ref{Racah1}.
\begin{lemma}\label{Racahk}
Let $\{A_p\}_{p=1\ldots k}$ be a set of $k$ disjoint subsets of $[n]$. Define $A_B:=\cup_{q \in B} A_q$ with $B\subset [k]$. Consider the following map
\[  \theta: R(k) \rightarrow R(n) :C_{B} \rightarrow C_{A_B}. \]
This is an injective morphism. We denote its image by $R^{A_1,\ldots, A_k}(k)$ and it is isomorphic to $R(k)$
\end{lemma}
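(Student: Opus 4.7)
The plan is to realize $\theta$ as the restriction to $R(k)\subset\usu^{\otimes k}$ of an ambient algebra homomorphism $\Phi:\usu^{\otimes k}\to\usu^{\otimes n}$, and then verify injectivity through faithful representations.

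For the construction of $\Phi$, I would proceed as follows. For each $p\in[k]$, the iterated coproduct $\mu^{*\,(|A_p|-1)}:\usu\to\usu^{\otimes|A_p|}$ composed with the embedding of $\usu^{\otimes|A_p|}$ into $\usu^{\otimes n}$ that places the resulting tensor at the ordered positions of $A_p$ (and $1$'s elsewhere) yields an algebra homomorphism $\Phi_p:\usu\to\usu^{\otimes n}$. Since the subsets $A_p$ are pairwise disjoint, the images $\Phi_p(\usu)$ commute pairwise in $\usu^{\otimes n}$, so by the universal property of the tensor product of algebras these $\Phi_p$ combine into a single homomorphism
\[ \Phi(x_1\otimes\cdots\otimes x_k):=\Phi_1(x_1)\,\Phi_2(x_2)\cdots\Phi_k(x_k). \]

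Next, I would show $\Phi(C_B)=C_{A_B}$ for every $B\subset[k]$. By the inductive definition of $\mathscr{C}_m$ together with the insertion operators $\tau$, the element $C_B\in\usu^{\otimes k}$ is precisely the Casimir $C\in\usu$ pushed through iterated coproducts onto the factors indexed by $B$, with $1$'s in positions $[k]\setminus B$. Applying $\Phi$ composes these coproducts with the coproducts implicit in each $\Phi_p$ for $p\in B$, and coassociativity of $\mu^*$ collapses the composition into a single iterated coproduct onto $A_B=\bigcup_{p\in B}A_p$; by definition this equals $C_{A_B}$. Consequently $\theta:=\Phi|_{R(k)}$ is a well-defined algebra morphism $R(k)\to R(n)$, and its image is, by construction, exactly $R^{A_1,\dots,A_k}(k)$.

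The hard part is injectivity, since we need to rule out spurious relations among the $C_{A_B}$ arising from the embedding in $R(n)$. I would handle this via representations. By Lemma \ref{irreducibility} the Dunkl model provides a family of irreducible representations of $R(k)$ depending on the deformation parameters $\mu_1,\dots,\mu_k$, and by varying these parameters the family jointly separates the points of $R(k)$. Pulling back a corresponding Dunkl representation of $R(n)$ along $\theta$ produces a Dunkl-type representation of $R(k)$ on the same harmonic space, with ``collapsed'' variables grouped according to the $A_p$ and effective deformation parameter $\sum_{j\in A_p}\mu_j$ attached to position $p\in[k]$. Letting $\mu_1,\dots,\mu_n$ vary freely, the resulting family of $R(k)$-representations that factors through $\theta$ is still jointly faithful, forcing $\ker\theta=0$ and yielding the asserted isomorphism $R(k)\cong R^{A_1,\dots,A_k}(k)$.
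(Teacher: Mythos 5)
Your construction of the ambient homomorphism $\Phi:\usu^{\otimes k}\to\usu^{\otimes n}$ and the verification that $\Phi(C_B)=C_{A_B}$ is essentially the intended argument: the paper simply cites the proof of Lemma \ref{Racah1} from \cite{DeBie&Genest$vandeVijver&Vinet} and notes that it generalizes from a threefold to a $k$-fold tensor product, and that proof is exactly the coproduct construction you describe. One small point: when the sets $A_p$ interleave inside $[n]$, collapsing the composed coproducts onto the ordered positions of $A_B$ uses not only coassociativity but also cocommutativity of $\mu^*$ (which holds, since $\mu^*$ is the standard primitive coproduct on a universal enveloping algebra); you should say so.

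The genuine gap is in your injectivity argument. You reduce $\ker\theta=0$ to the claim that the Dunkl representations of $R(k)$, as the deformation parameters vary, jointly separate the points of $R(k)$. Nothing in the paper establishes this: Lemma \ref{irreducibility} gives irreducibility of each module $\mathcal{H}_a^{\vec\epsilon}$, and irreducibility is not faithfulness, let alone joint faithfulness of the family. Your description of the pulled-back module is also imprecise --- restricting $\mathcal{H}_a^{\vec\epsilon}$ to $R^{A_1,\ldots,A_k}(k)$ yields a direct sum of lowest-weight modules whose effective parameters involve the internal degrees on each block $A_p$, not just $\sum_{j\in A_p}\mu_j$ --- so even granting a separation principle, the bookkeeping is heavier than you indicate. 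The detour through representations is in any case unnecessary: $\Phi$ itself is injective as a linear map, because each iterated coproduct $\mu^{*(|A_p|-1)}$ is split injective (compose with counits on all but one leg), a tensor product of injective linear maps over a field is injective, and the insertion of $1$'s and the reordering of tensor legs are injective as well; hence $\theta=\Phi|_{R(k)}$ is injective. Replacing your third paragraph by this observation closes the gap.
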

\begin{proof}
The proof of Lemma \ref{Racah1} which is given in \cite{DeBie&Genest$vandeVijver&Vinet} generalizes easily by considering a $k$-fold tensor product of $\mathfrak{su}(1,1)$ instead of a threefold tensor product mapping into the $n$-fold tensor product space.
\end{proof}
We want to find a basis of Dunkl-harmonics $\{\phi_{\vec m}\}$ so that $ \bra{ \phi_{\vec m}}\ket{\psi_{\vec \ell}}$ are multivariate Racah polynomials. This can be obtained by considering a lower rank Racah algebra inside $R(n)$ which will preserve certain eigenspaces. Then we are able to employ Theorem \ref{MultivariateRacah} to find the connection coefficients as multivariate Racah polynomials.
To this end consider the labelling Abelian algebra $\mathscr{Y}^\psi:=\langle C_{[t]}\rangle_{t=2\ldots n-1}$ which diagonalizes $\{\psi_{\vec \ell}\}$. We consider a second Abelian algebra $\mathscr{Y}^\phi$, which is not a labelling Abelian algebra, generated by the following sets
\[
\mathcal{U}_1:=\{ C_{[t]}\}_{t=2\ldots i+1}, \quad \mathcal{U}_2:= \{ C_{[t]}\}_{t=i+j+2\ldots n-1} \text{ and } \mathcal{V}_\phi:=\{ C_{[2+i \ldots i+t+2]} \}_{t=1\ldots j}. 
\]
Notice that $\mathscr{Y}^\psi \cap \mathscr{Y}^\phi=\langle \mathcal{U}_1 \cup \mathcal{U}_2 \rangle$. The operator $C_{[i+2\ldots j+i+2]}$ is in $\mathcal{V}_\phi$ so it is also in $\mathscr{Y}^\phi$. According to Lemma \ref{spectra} the operators in $\mathscr{Y}^\phi$ are diagonizable. These operators are also pairwise commutative. It follows that there exists a basis $\{ \phi_{\vec m}\}$ diagonalizing $\mathscr{Y}^\phi$. The basis $\{ \phi_{\vec m} \}$ diagonalizing $\mathscr{Y}^\phi$ necessarily also diagonalizes $C_{[i+2\ldots j+i+2]}$. We want to find the connection coefficients $\bra{ \phi_{\vec m}}\ket{ \psi_{\vec \ell}}$. Introduce the set $\mathcal{V}_\psi:=\{ C_{[t]}\}_{t=i+2\ldots i+j+1}$. The following diagram shows the change we want to make:
\begin{center}
\begin{tikzpicture}[->]
 \matrix(M)[matrix of nodes, column sep=0.5 cm, row sep=0.2cm]{
  $\{ \psi_{ \vec \ell}\}$ & $\{ \phi_{ \vec m}\}$\\
  $\mathcal{U}_1$ 	& $\mathcal{U}_1$		\\
  $\mathcal{V}_\psi$ 	& $\mathcal{V}_\phi$		\\
  $ \mathcal{U}_2$ & $\mathcal{U}_2$     \\
   };
 \path (M-1-1) edge[bend left] (M-1-2); 
 \node[draw=black, rounded corners=1ex, fit=(M-3-1)(M-3-2),inner sep=1pt]{};
  \end{tikzpicture}
\end{center}
The Abelian algebras $\mathscr{Y}^\phi$ and $\mathscr{Y}^\psi$ differ by the sets  $\mathcal{V}_\psi$ and $\mathcal{V}_\phi$ which are indicated by a box.	
Consider the eigenspaces of $\langle \mathcal{U}_1 \cup \mathcal{U}_2 \rangle=\mathscr{Y}^\psi\cap\mathscr{Y}^\phi$. The operators in the sets $\mathcal{V}_\psi$ and $\mathcal{V}_\phi$ commute with $\mathcal{U}_1 \cup \mathcal{U}_2$ and therefore preserve these eigenspaces. We have the following claim:

\begin{lemma}
The algebra $R(n)$ has a subalgebra, denoted by $R(\mathcal{V}_\psi,\mathcal{V}_\phi)$, isomorphic to a rank $j$ Racah algebra which contains the sets $\mathcal{V}_\psi$ and $\mathcal{V}_\phi$. This subalgebra commutes with  $\mathcal{U}_1 \cup \mathcal{U}_2$ and the sets $\mathcal{V}_\psi$ and $\mathcal{V}_\phi$ each generate a labelling Abelian algebra.
\end{lemma}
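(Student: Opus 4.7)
The plan is to apply Lemma \ref{Racahk} with a carefully chosen collection of $j+2$ pairwise disjoint subsets of $[n]$. Specifically, I would set $A_1 := [i+1] = \{1, \ldots, i+1\}$ and $A_p := \{i+p\}$ for $p = 2, \ldots, j+2$. These are clearly pairwise disjoint, and Lemma \ref{Racahk} then produces an injective morphism $\theta : R(j+2) \hookrightarrow R(n)$; the image is defined to be $R(\mathcal{V}_\psi, \mathcal{V}_\phi)$, which is automatically isomorphic to the rank $j$ Racah algebra $R(j+2)$.

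Next I would verify that $\mathcal{V}_\psi$ and $\mathcal{V}_\phi$ both lie in this image by direct unpacking. For $t = i+2, \ldots, i+j+1$, one has $[t] = A_1 \cup A_2 \cup \cdots \cup A_{t-i} = A_{[t-i]}$, so $C_{[t]} = \theta(C_{[t-i]})$ with $2 \leq t-i \leq j+1$, putting $\mathcal{V}_\psi$ in the image. Similarly, for $t = 1, \ldots, j$, one has $[i+2 \ldots i+t+2] = A_2 \cup A_3 \cup \cdots \cup A_{t+2} = A_{[2 \ldots t+2]}$, so $C_{[i+2 \ldots i+t+2]} = \theta(C_{[2 \ldots t+2]})$, placing $\mathcal{V}_\phi$ in the image as well.

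For the commutativity statement, every generator of $R(\mathcal{V}_\psi, \mathcal{V}_\phi)$ is of the form $C_{A_B}$ with $B \subset [j+2]$. I would check the commutation with each $C_{[t]} \in \mathcal{U}_1 \cup \mathcal{U}_2$ by invoking Lemma \ref{Commute}. If $C_{[t]} \in \mathcal{U}_1$ (so $t \leq i+1$), then $[t] \subset A_1$; if $1 \in B$ then $[t] \subset A_1 \subset A_B$, and if $1 \notin B$ then $A_B$ is a union of singletons from $\{i+2, \ldots, i+j+2\}$, hence $A_B \cap [t] = \emptyset$. If $C_{[t]} \in \mathcal{U}_2$ (so $t \geq i+j+2$), then $A_B \subset [i+j+2] \subset [t]$. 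In each case the containment or disjointness hypothesis of Lemma \ref{Commute} is satisfied.

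Finally, for the labelling Abelian algebra property, I would pull $\mathcal{V}_\psi$ and $\mathcal{V}_\phi$ back through $\theta^{-1}$: the preimage of $\mathcal{V}_\psi$ is $\{C_{[s]} : s = 2, \ldots, j+1\}$, corresponding to the maximal chain $[2] \subsetneq [3] \subsetneq \cdots \subsetneq [j+1]$ inside $[j+2]$, while the preimage of $\mathcal{V}_\phi$ is $\{C_{[2 \ldots s]} : s = 3, \ldots, j+2\}$, corresponding to the maximal chain $[2\ldots 3] \subsetneq [2\ldots 4] \subsetneq \cdots \subsetneq [2\ldots j+2]$. Both chains have maximal length $j$ and avoid the central singletons and the full set, so each generates a labelling Abelian algebra of $R(j+2)$ in the sense of Definition \ref{labelling}. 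The main obstacle here is purely notational bookkeeping; once the disjoint subsets $A_p$ are chosen correctly, everything reduces to the embedding lemma and the basic containment/disjointness criterion.
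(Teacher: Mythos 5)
Your proof is correct and follows essentially the same route as the paper: the same choice of disjoint subsets $A_1=[i+1]$, $A_p=\{i+p\}$, the same appeal to Lemma \ref{Racahk} to obtain the embedding $\theta$, and the same use of Lemma \ref{Commute} for the commutation with $\mathcal{U}_1\cup\mathcal{U}_2$. Your treatment of the commutativity step is in fact slightly more thorough than the paper's, since you check containment or disjointness for an arbitrary generator $C_{A_B}$ rather than only for the generators of the two chains.
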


\begin{proof}
We want to use Lemma \ref{Racahk}. To do this, we need to find $j+2$ subsets $A_p$ of $[n]$. We choose the following:
\[ 
	A_1:=[i+1], \qquad A_p:=\{i+p\} \quad \text{ for } p\in [2\ldots j+2] .
\]
The isomorphism $\theta$ supplied by Lemma \ref{Racahk} maps $R(j+2)$ into $R(n)$, its image being $R^{[i+1],i+2,\ldots,i+j+2}(j+2)$. We denote this algebra by $R(\mathcal{V}_\psi,\mathcal{V}_\phi)$. Consider the labelling Abelian algebra $\langle C'_{[t]}\rangle_{t=2\ldots j+1}$ in $R(j+2)$ where we use an apostrophe for the generators of $R(j+2)$.  The images of its generators is given by:
\begin{equation}\label{isomorphism1}
	 \theta(C'_{[t]})=C_{A_{[t]}}=C_{[t+i]}. 
\end{equation}
These are exactly the operators in $\mathcal{V}_\psi$. Now consider the labelling Abelian algebra $\langle C'_{[2\ldots t+1]}\rangle_{t=2\ldots j+1}$ in $R(j+2)$. The images of its generators are given by:
\begin{equation}\label{isomorphism2}
	\theta(C'_{[2\ldots t+1]})=C_{A_{[2\ldots t+1]}}=C_{[2+i \ldots t+i+1]}. 
\end{equation}
These are exactly the operators in $\mathcal{V}_\phi$.
The operators in $\mathcal{U}_1=\{ C_{[t]}\}_{t=2\ldots i+1}$ commute with the subalgebra $R(\mathcal{V}_\psi,\mathcal{V}_\phi)$ by Lemma \ref{Commute} as $[t] \subset A_1$ and $[t] \cap A_p=\emptyset$ for $t \leq i+1$ and $p \geq 2$. Similarly the operators in $\mathcal{U}_2=\{ C_{[t]}\}_{t=i+j+2\ldots n-1}$ commute with the subalgebra $R(\mathcal{V}_\psi,\mathcal{V}_\phi)$ by Lemma \ref{Commute}  as $A_{[j+1]}\subset [t]$ for $t \geq j+2$.
\end{proof}

We have found an algebra $R(\mathcal{V}_\psi,\mathcal{V}_\phi)$ such that it commutes with the operators in $\mathcal{U}_1\cup\mathcal{U}_2$ and therefore preserves the eigenspaces of this set of operators. Moreover, when we restrict ourselves to these eigenspaces the bases $\{ \psi_{\vec \ell} \}$ and $\{ \phi_{\vec m}\}$ are diagonalized by the labelling Abelian algebras $\langle \mathcal{V}_\psi\rangle$ and  $\langle \mathcal{V}_\phi \rangle$ of the algebra $R(\mathcal{V}_\psi,\mathcal{V}_\phi)$.  This allows us to employ Theorem \ref{MultivariateRacah}. We conclude that the connection coefficients are multivariate Racah polynomials. 
To find the explicit expression of the connection coefficients, we use the isomorphism
\[ \theta: R(j+2) \rightarrow R(\mathcal{V}_\psi,\mathcal{V}_\phi). \]
We compare the spectra of $R(j+2)$ and $R(\mathcal{V}_\psi,\mathcal{V}_\phi)$ by Lemma \ref{spectra}.
The first set of operators we consider are the central elements:
\begin{equation*}
	\theta(C'_1)=C_{[i+1]}, \qquad \theta(C'_t)=C_{i+t} \qquad \text{ if } t \in [2 \ldots j+2] .
\end{equation*}
This leads to the following identificiation:
\begin{equation}\label{variablechange1}
	\kappa(0,\beta'_0)=\kappa(x_i,\beta_i), \qquad \kappa(0,\beta'_{t-1}-\beta'_{t-2}-1)= \kappa(0,\beta_{t+i-1}-\beta_{t+i-2}-1).
\end{equation}
The second set of operators we consider is the set $\mathcal{V}_\psi$, see (\ref{isomorphism1}). This leads to a second set of equations:
\begin{equation}\label{variablechange2}
	 \kappa(x'_{t-1},\beta'_{t-1})=\kappa(x_{t+i-1},\beta_{t+i-1}) \qquad \text{ if } t \in [2 \ldots j+1].
\end{equation}
The third set of operators we consider is the set $\mathcal{V}_\phi$, see (\ref{isomorphism2}). This leads to a third set of equations:
\begin{equation}\label{variablechange3}
	 \kappa(|\vec k'|_{t-1},\beta'_t-\beta'_0-1)=\kappa(|\vec m|_{i+1}^{i+t-1},\beta_{t+i}-\beta_{i}-1) \quad \text{ if } t \in [2\ldots j+1].
\end{equation}
with $|\vec m|_{p}^{q}=\sum_{r=p}^q m_p$.
The last equation is obtained by considering $\theta(C'_{[j+2]})=C_{[i+j+2]}$:
\begin{equation}\label{variablechange4}
       \kappa(x'_{j+1},\beta'_{j+1})=\kappa(x_{i+j+1},\beta_{i+j+1}).
\end{equation}
Solving the set of equations (\ref{variablechange1}), (\ref{variablechange2}), (\ref{variablechange3}) and (\ref{variablechange4}) for the variables with an apostrophe we find:
\begin{equation}\label{CAsubstitution}
\begin{aligned}
\beta'_t&=\beta_{t+i}+2x_{i}, \qquad &t \in [0 \ldots j+1] \\
x'_{t}&=x_{t+i}-x_{i}, \qquad &t \in [1 \ldots j] \\
k'_t&=m_{t+i}, \qquad &t \in [1 \ldots j] \\
N'&=x'_{j+1}=x_{j+i+1}-x_{i}. &
\end{aligned}
\end{equation}
Substituting these variables in the multivariate Racah polynomials and the gauge factors, one obtains the connection coefficients as:
\[
 \bra{ \phi_{\vec m}}\ket{\psi_{\vec \ell}}=\sqrt{\omega_{j}(\vec x')\mu_{j}(\vec k')}R_{j}(\vec k'; \vec x'; \vec \beta' ;N')\prod_{t=1}^{i}\delta_{m_t \ell_t}\prod_{t=i+j+1}^n\delta_{m_t \ell_t}.
\]
The eigenvector $\psi_{\vec \ell}$ of the operator $C_{[i+2 \ldots i+j+2]}$ belongs to the eigenvalue 
\[
\kappa(|\vec m|_{i+1}^{i+j},\beta_{j+i+1}-\beta_{i}-1).
\]
 We conclude that the action of $C_{[i+2 \ldots i+j+2]}$ on $ \bra{ \phi_{\vec m}}\ket{\psi_{\vec \ell}}$ coincides with the action of
\begin{equation}\label{CAoperator}
\begin{aligned}
 	\sqrt{\omega_{j}(\vec x')}\big(-\mathcal{L}_{j}(0,x'_1,\ldots,x'_{j+1},&\beta'_0,\ldots,\beta'_{j+1},E_{x'_1},\ldots,E_{x'_{j}}) \\
			&+\kappa(0,\beta'_{j+1}-\beta'_0-1)\big)\sqrt{\omega_{j}(\vec x')}^{-1}.
\end{aligned}
\end{equation}
The operator $E_{x'_t}$ equals $E_{x_{t+i}}$ for $1\leq t \leq j$ as $\vec \beta'$ and $x_i$ remain fixed. Next consider the gauge factors $\omega_j(\vec x')$. We want every operator in $R(n)$ to have the same gauge factor $\omega_{n-2}(\vec x)$. We define the following factor:
\begin{align*}
  f	&:=\frac{\omega_{n-2}(\vec x)}{\omega_j(\vec x')}\\
  	&=\prod_{t=0}^{i-1} \frac{\Gamma(\beta_{t+1}+x_{t+1}+x_{t})\Gamma(\beta_{t+1}-\beta_t+x_{t+1}-x_t)}{\Gamma(\beta_t+x_{t+1}+1+x_t)(x_{t+1}-x_t)!} \prod_{t=1}^{i} (\beta_t+2x_t) \\
	&\times \prod_{t=j+i+1}^{n-2} \frac{\Gamma(\beta_{t+1}+x_{t+1}+x_{t})\Gamma(\beta_{t+1}-\beta_t+x_{t+1}-x_t)}{\Gamma(\beta_t+x_{t+1}+1+x_t)(x_{t+1}-x_t)!} \prod_{t=j+i+1}^{n-2} (\beta_t+2x_t) .	
\end{align*}
The factor $f$ does not depend on $x_{i+1},\ldots, x_{i+j}$. Therefore the operator in formula (\ref{CAoperator}) leaves $f$ invariant and we can conjugate by $\sqrt{f}$ without altering the action of 
$C_{[i+2 \ldots i+j+2]}$. 
As a final step we perform the substitution ($\ref{CAsubstitution}$) on the Racah operator
\[
 \mathcal{L}_{j}(0,x_{i+1}-x_i,\ldots,x_{j+i+1}-x_i,\beta_i+2x_i,\ldots,\beta_{j+1}+2x_i,E_{x_{i+1}},\ldots,E_{x_{i+j}}) .
\]
This operator equals
\[
 \mathcal{L}_{j}(x_i,x_{i+1},\ldots,x_{j+i+1},\beta_i,\ldots,\beta_{j+1},E_{x_{i+1}},\ldots,E_{x_{i+j}}). 
\]
This can be seen by considering the explicit expression for these Racah operators (see Definition \ref{ShiftOperator}) and by observing that $B'^{p,q}_t=B^{p,q}_{t+i}$ and $b'^p_t=b^p_{t+i}$.
Hence, the operator $C_{[i+2 \ldots i+j+2]}$ equals
\begin{equation}\label{CAfinal}
\begin{aligned}
 	\sqrt{\omega_{n-2}(\vec x)}\big(-\mathcal{L}_{j}(x_i,x_{i+1},\ldots,&x_{j+i+1},\beta_i,\ldots,\beta_{i+j+1},E_{x_{i+1}},\ldots,E_{x_{i+j}})  \\
			&+\kappa(0,\beta_{i+j+1}-\beta_i-1)\big)\sqrt{\omega_{n-2}(\vec x)}^{-1}.
\end{aligned}
\end{equation}

\subsection{Proof of the main Theorem}
We now have  an explicit expression for every generator in the algebra $R(n)$. We conjugate the algebra by a factor $\sqrt{\omega_{n-2}(\vec x)}^{-1}$. As the conjugation and the lifting from the Dunkl-model do not change the algebra relations, the resulting algebra is a discrete realization of $R(n)$.  These are the resulting operators:
\begin{equation}\label{final1}
 C^c_{j+1}=\kappa(0,\beta_j- \beta_{j-1}-1), \qquad 2\leq j\leq n
\end{equation}
and
\begin{equation}\label{final2}
 C^c_{[j]}=\kappa(x_{j-1},\beta_{j-1}), \qquad 1\leq j \leq n
\end{equation}
These operators remain unchanged by the conjugation. From formula (\ref{GaugedTwo}) we have
\begin{equation}\label{final3}
\begin{aligned}
 C^c_{[2\ldots j+2]}=-\mathcal{L}_{j}(x_0,x_{1},\ldots,x_{j+1},\beta_0,\ldots,\beta_{j+1},E_{x_{1}},\ldots,E_{x_{j}})\\+\kappa(0,\beta_{j+1}-\beta_0-1).
 \end{aligned}
\end{equation}
We have not set $x_0=0$ yet.
Finally, by formula (\ref{CAfinal}) we have
\begin{equation}\label{final4}
\begin{aligned}
C^c_{[i+2 \ldots i+j+2]}=-\mathcal{L}_{j}(x_i,x_{i+1},\ldots,x_{j+i+1},\beta_i,\ldots,\beta_{i+j+1},E_{x_{i+1}},\ldots,E_{x_{i+j}})  \\
			+\kappa(0,\beta_{i+j+1}-\beta_i-1).
\end{aligned}
\end{equation}
Notice that $\sigma^i(C^c_{[2\ldots j+2]})=C^c_{[i+2 \ldots i+j+2]}$. The operators in formula (\ref{final2}) concur with formula (\ref{OperatorOne}). The operators in formula (\ref{final3}) concur with formula (\ref{OperatorTwo}) and the operators in formula (\ref{final1}) and formula (\ref{final4}) concur with formula (\ref{OperatorThree}). This proves Theorem \ref{MainTheorem}.

 \section*{Acknowledgements}
 We thank Luc Vinet for fruitful discussions and helpful comments.
This work was supported by the Research Foundation 
Flanders (FWO) under Grant EOS 30889451. WVDV is grateful to the Fonds Professor Frans Wuytack for supporting his research.

\end{document}